\documentclass{article}

\usepackage{amsmath}
\usepackage{amssymb}
\usepackage{graphicx}
\usepackage{bm}
\usepackage{mathdots}
\usepackage{booktabs}
\usepackage{rotating}
\usepackage[ruled,linesnumbered]{algorithm2e}
\usepackage[a4paper,left=2.8cm,right=2.8cm,top=2.5cm,bottom=2.5cm]{geometry}
\usepackage{fancyhdr}
\pagestyle{fancy}
\fancyhf{}
\usepackage{hyperref}
\usepackage{url}
\usepackage{mathtools}
\usepackage{multirow}

\chead{Hybrid hyperinterpolation over general regions}
\cfoot{\thepage}
\usepackage[framemethod=tikz]{mdframed}

\newtheorem{theorem}{Theorem}[section]

\newtheorem{lemma}{Lemma}[section]
\newtheorem{definition}{Definition}[section]

\newtheorem{remark}{Remark}[section]

\makeatletter % `@' now normal "letter"
%\@addtoreset{equation}{section}

\numberwithin{equation}{section}
\makeatother  % `@' is restored as "non-letter"

\newenvironment{proof}{{\noindent\it Proof.}\quad}{\hfill $\square$\\}

\allowdisplaybreaks

\usepackage[mathscr]{euscript}
\def\mathcal{\mathscr}

\newcommand{\lassohyper}{\mathscr{L}_{L}^{\lambda}}
\newcommand{\filteredhyper}{\mathscr{F}_{L,N}}
\newcommand{\hyper}{\mathscr{L}_{L}}
\newcommand{\hybridhyper}{\mathscr{H}_{L}^{\lambda}}

\newcommand{\discreteinner}[1]{\left\langle#1\right\rangle_N}
\newcommand{\inner}[1]{\left\langle#1\right\rangle}

\newcommand{\squarebrackets}[1]{\left[#1\right]}
\newcommand{\braces}[1]{\left\{#1\right\}}

\newcommand{\softoperatornew}[1]{\mathscr{S}_{\lambda\mu_{\ell}}\left(#1\right)}

\newcommand{\measure}{\text{d}\omega}
\newcommand{\rmd}{\text{d}}

\newcommand{\summ}[2]{\sum\limits_{#1 =1}^{#2}}
\newcommand{\norminfty}[1]{\|#1\|_{\infty}}
\newcommand{\normtwo}[1]{\|#1\|_2}

\begin{document}
% Stable image reconstruction via the springback-penalized model
\title{Hybrid hyperinterpolation over general regions}

\author{Congpei An\footnotemark[1],\footnotemark[2]  Jiashu Ran\footnotemark[1] and Alvise Sommariva\footnotemark[3]}

\renewcommand{\thefootnote}{\fnsymbol{footnote}}
\footnotetext[1]{School of Mathematics, Southwestern University of Finance and Economics, Chengdu, China (ranjiashuu@gmail.com),}
\footnotetext[2]{Present address:Department of Mathematics, The University of Hong Kong, Hong Kong, China(andbachcp@gmail.com),}
\footnotetext[3]{Department of Mathematics, University of Padova, Italy (alvise@math.unipd.it).}

% \date{}
%\shorttitle{SHORT TITLE}
%\shortauthor{F.~FIRSTA AND S.~SECONDA}
% \usepackage{keywords}

\maketitle

\begin{abstract}
We present an  $\ell^2_2+\ell_1$-regularized discrete least squares approximation over general regions under assumptions of hyperinterpolation, named hybrid hyperinterpolation. Hybrid hyperinterpolation, using a soft thresholding operator and a filter function to shrink the Fourier coefficients approximated by a high-order quadrature rule of a given continuous function with respect to some orthonormal basis, is a combination of Lasso and filtered hyperinterpolations. Hybrid hyperinterpolation inherits features of them to deal with noisy data once the regularization parameter and the filter function are chosen well.  We derive $L_2$ errors in theoretical analysis for hybrid hyperinterpolation to approximate continuous functions with noise data on sampling points.  Numerical examples illustrate the theoretical results and show that well chosen regularization parameters can enhance the approximation quality over the unit-sphere and the union of disks.
\end{abstract}

\textbf{Keywords: }{ hyperinterpolation, lasso, filter, noise, error estimate}

\textbf{AMS subject classifications.} 65D15, 65A05, 41A10,  33C5

\section{Introduction}
Hyperinterpolation, originally introduced by Sloan in the seminal paper \cite{sloan1995polynomial}, is a constructive approximation method for continuous functions over some compact subsets or manifolds. Briefly speaking, hyperinterpolation uses a high-order quadrature rule to approximate a truncated Fourier expansion in a series of orthogonal polynomials for some measure on a given multidimensional domain, and is the unique solution to a discrete least squares approximation, which means it is also a projection (\cite[Lemma 5]{sloan1995polynomial}). It is well known that the least squares approximation usually tackles with the scenario that the noise level is relatively small \cite{Hesse2021local,hesse2017rbfs,Le2009localized}. Adding a regularization term to the least squares approximation becomes a popular approach to solve the case where the noise level is large. For example, Tikhonov regularized least squares approximation \cite{an2021tikhonov} performs well on $[-1,1]$ to handle noisy data. On the sphere, filtered hyperinterpolation with a certain filter function is equivalent to Tikhonov regularized least squares approximation \cite{an2012regularized,an2021tikhonov}, which is also a constructive polynomial scheme \cite{an2012regularized,sloan2012filtered}. Tikhonov regularized least squares approximation continuously shrinks all coefficients of hyperinterpolation without dismissing less relevant ones, while filtered hyperinterpolation processes these coefficients in a similar way but dismisses some less relevant coefficients. Concisely, they do not possess the basis selection ability.

Lasso hyperinterpolation introduced by An and Wu \cite{an2021lasso} is an $\ell_1$-regularized discrete least square approximation in order to recover functions with noise, which does both continuous shrinkage and automatic basis selection simultaneously. However, Lasso hyperinterpolation could not shrink coefficients in a more flexible manner, that is keeping high relevant coefficients invariant and doubly shrinking less relevant ones. Penalty parameters in Lasso hyperinterpolation are difficult to take effect since they combine with the regularization parameter to shrink coefficients making things more complicated.

In this paper, we develop a new variant of hyperinterpolation based on  filtered hyperinterpolation and Lasso hyperinterpolation --- \emph{hybrid hyperinterpolation}, which actually corresponds to an $\ell^2_2 + \ell_1$-regularized discrete least squares approximation (see Theorem \ref{thm:HybridHyperSol}). Hybrid hyperinterpolation processes coefficients of hyperinterpolation by means of a soft thresholding operator and a filter function. It not only inherits denosing and basis selection abilities of Lasso hyperinterpolation, but also possesses characteristic of filtered hyperinterpolation remaining high relevant coefficients of hyperinterpolation invariant and shrinking less relevant ones; it performs better in function recoveries than Lasso hyperinterpolation and filtered hyperinterpolation over some general regions, and is more robust in denosing. The $L_2$ operator norm of the hybrid hyperinterpolation operator is analyzed, providing a theoretical estimate of the $L_2$ error bound.  

An interesting numerical example over the union of disks (see section \ref{UnionDisk}) is considered, in which the  polynomial orthonormal basis can not be given explicitly by a simply expression and is obtained in numerical ways. This example shows that variants of hyperinterpolation could also contribute to denosing over some more complicated compact regions like it does on the interval, the unit-disk, the unit-sphere and the unit-cube \cite{an2021lasso}. 

The rest of the paper is organized as follows. In section 2, the backgrounds of hyperinterpolation and its variants are reviewed. In section 3, we show that how hyperinterpolation combines a soft thresholding operator with a filter function to become the hybrid hyperinterpolation. $L_2$ errors analysis for hybrid hyperinterpolation to approximate continuous functions with noise and noise-free are derived in section 4. Numerical examples over the unit-sphere and the union of disks are specifically presented in section 5.
%%%%%%%%%%%%%%%%%%%%%%%%%%%%%%%%%%%%%%%%%%%%%%%%%%%%%%%%%%%%%%%%%%%%%%%%%%%%%%%%%%%%%%%%%%%%%%%%%%%%%%%%%%%%%%%%%%%%
\section{Backgrounds of hyperinterpolation}
Let $\Omega$ be a compact set of $\mathbb{R}^s$ with a positive measure $\omega$.  Suppose $\Omega$ has finite measure with respect to $\measure$, that is,
\begin{eqnarray*}
    \int_{\Omega} \rmd \omega = V < \infty.
\end{eqnarray*}

We denote by $L^2(\Omega)$ the Hilbert space of square-integrable functions on $\Omega$ with the $L^2$ inner product
\begin{eqnarray}\label{equ:continuousinner}
\langle f ,g \rangle = \int_{\Omega} fg \rmd \omega , \qquad \forall f,g \in L^2(\Omega),
\end{eqnarray}
and the induced norm $\|f\|_2: = \langle f,f\rangle^{1/2}$. Let $\mathbb{P}_L (\Omega) \subset L^2(\Omega)$ be the linear space of polynomials with total degree not strictly greater than $L$, restricted to $\Omega$, and let $d= \dim (\mathbb{P}_L(\Omega))$ be the dimension of $\mathbb{P}_L(\Omega)$.

Next we define an orthonormal basis of $\mathbb{P}_L(\Omega)$
\begin{eqnarray}\label{set:orthonormalbasis}
\{ \Phi_{\ell}| \ell = 1, \ldots, d \} \subset \mathbb{P}_L(\Omega)
\end{eqnarray}
in the sense of
\begin{eqnarray}\label{equ:kronecker}
\langle \Phi_{\ell},\Phi_{\ell'} \rangle = \delta_{\ell \ell'}, \qquad \forall 1 \leq \ell, \ell' \leq d.
\end{eqnarray}

The $L^2(\Omega)$-\emph{orthogonal projection} $\mathscr{T}_L: L^2(\Omega) \to \mathbb{P}_L(\Omega)$ can be uniquely defined by
\begin{eqnarray}\label{def:L2Projection}
\mathscr{T}_L  f:= \sum_{\ell=1}^{d} \hat{f}_{\ell} \Phi_{\ell} = \sum_{\ell=1}^{d} \langle f, \Phi_{\ell} \rangle \Phi_{\ell}  , \qquad \forall f \in L^2(\Omega),
\end{eqnarray}
where $\{\hat{f}_{\ell}\}_{\ell=1}^{d}$ are the Fourier coefficients
\begin{eqnarray*}
    \hat{f}_{\ell} := \langle f, \Phi_{\ell} \rangle = \int_{\Omega} f\Phi_{\ell} \rmd \omega, \qquad \forall \ell = 1,\ldots,d.
\end{eqnarray*}

In order to numerically evaluate the scalar product in \eqref{def:L2Projection}, it is fundamental to consider an $N$-point quadrature rule of PI-type (Positive weights and Interior nodes), i.e.,
\begin{eqnarray}\label{equ:Npointquad}
\sum_{j=1}^{N} w_{j}g(\mathbf{x}_j) \approx \int_{\Omega} g \rmd \omega, \qquad \forall g \in \mathscr{C}(\Omega),
\end{eqnarray}
where the quadrature points $\{\mathbf{x}_1, \ldots, \mathbf{x}_N\}$ belong to $\Omega$ and the corresponding quadrature weights $\{w_1, \ldots, w_N\}$ are positive, and $\mathcal{C}(\Omega)$ is a continuous function space. Furthermore we say that \eqref{equ:Npointquad} has algebraic degree of exactness $\delta$ if

\begin{eqnarray}\label{equ:exactness}
\sum_{j=1}^{N} w_j p(\mathbf{x}_j) = \int_{\Omega}p \rmd \omega ,   \qquad \forall p \in\mathbb{P}_{\delta}(\Omega).
\end{eqnarray}
With the help of a quadrature rule \eqref{equ:Npointquad} with algebraic degree of exactness $\delta=2L$, we can introduce a ``\emph{discrete inner product}'' on  $\mathscr{C}(\Omega)$ \cite{sloan1995polynomial} by
\begin{eqnarray}\label{equ:discreteinner}
\langle f,g \rangle_N := \sum_{j=1}^{N} w_j f(\mathbf{x}_j) g(\mathbf{x}_j), \qquad \forall f,g \in \mathscr{C}(\Omega),
\end{eqnarray}
corresponding to the $L^2(\Omega)$-inner product \eqref{equ:continuousinner}. For any $p,q \in \mathbb{P}_{L}(\Omega)$, the product $pq$ is a polynomial in $\mathbb{P}_{2L}(\Omega)$. Therefore it follows from the quadrature exactness of \eqref{equ:exactness} for polynomials of degree at most $2L$ that
\begin{eqnarray}\label{equ:kroneckerdiscrete}
\langle {p,q} \rangle_N = \langle {p,q} \rangle = \int_{\Omega} pq \rmd \omega, \qquad \forall p, q \in \mathbb{P}_{L}(\Omega).
\end{eqnarray}

In 1995, Sloan introduced in \cite{sloan1995polynomial} the \emph{hyperinterpolation operator} $\mathscr{L}_L: \mathscr{C}(\Omega) \to \mathbb{P}_L(\Omega)$ as
\begin{eqnarray}\label{equ:hyper}
\mathcal{L}_L f:= \sum_{\ell=1}^{d} \langle {f, \Phi_{\ell}} \rangle_N   \Phi_{\ell}, \qquad \forall  f \in \mathscr{C}(\Omega).
\end{eqnarray}
$\hyper{f}$ is a projection of $f$ onto $\mathbb{P}_L(\Omega)$ obtained by replacing the $L^2(\Omega)$-inner products \eqref{equ:continuousinner} in the $L^2(\Omega)$-orthogonal projection $\mathscr{T}_L f$ by the discrete inner products \eqref{equ:discreteinner}.

To explore one of its important features, we introduce the following discrete least squares approximation problem
\begin{eqnarray}\label{HyperLeastApprox}
\min_{p \in \mathbb{P}_L(\Omega) }  \left\{ \frac{1}{2} \sum_{j=1}^{N}w_j[p(\mathbf{x}_j) - f(\mathbf{x}_j)]^2 \right\}
\end{eqnarray}
with $p(\mathbf{x}) = \sum_{\ell=1}^{d}\alpha_{\ell}\Phi_{\ell}(\mathbf{x}) \in \mathbb{P}_{L}(\Omega)$, or equivalently
\begin{eqnarray}\label{HyperLeastApproxMatrix}
\min\limits_{\bm{\alpha} \in \mathbb{R}^d}~~ \frac{1}{2} \|\mathbf{W}^{1/2} ( \mathbf{A}\bm{\alpha} - \mathbf{f})\|_2^2,
\end{eqnarray}
where $\mathbf{W}=\text{diag}(w_1,\ldots,w_N)$ is the quadrature weights matrix, $\mathbf{A}=(\Phi_{\ell}(\mathbf{x}_j))\in\mathbb{R}^{N\times d}$ is the sampling matrix, $\bm{\alpha}=[\alpha_1,\ldots,\alpha_d]^{\text{T}}\in\mathbb{R}^d$ and $\mathbf{f}=[f(\mathbf{x}_1),\ldots,f(\mathbf{x}_N)]^{\text{T}}\in\mathbb{R}^N$ are two column vectors (recall $\mathbf{x}_j\in\mathbb{R}^s$). Sloan in \cite{sloan1995polynomial}  revealed that the relation between the hyperinterpolant $\hyper{f}$ and the best discrete least squares approximation (weighted by quadrature weights) of $f$ at the quadrature points. More precisely he proved the following important result:
\begin{lemma}[Lemma 5 in \cite{sloan1995polynomial}]\label{prop:HyperLeastApprox}
Given $f\in\mathscr{C}(\Omega)$, let $\hyper{f}\in\mathbb{P}_L(\Omega)$ be defined by \eqref{equ:hyper}, where the discrete scalar product $\langle f,g \rangle_N$ in (\ref{equ:discreteinner}) is defined by an $N$-point quadrature rule of PI-type in $\Omega$ with algebraic degree of exactness $2L$. Then $\hyper{f}$ is the unique solution to the approximation problem \eqref{HyperLeastApprox}.
\end{lemma}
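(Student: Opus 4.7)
The plan is to identify the least squares objective as a strictly convex quadratic in the coefficient vector $\bm{\alpha}$ and then exploit the exactness property \eqref{equ:kroneckerdiscrete} to decouple it into a sum of one-dimensional quadratics whose unique minima reproduce the hyperinterpolation coefficients.

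Concretely, I would first expand any candidate $p = \sum_{\ell=1}^d \alpha_\ell \Phi_\ell \in \mathbb{P}_L(\Omega)$ and rewrite the functional in \eqref{HyperLeastApprox} in terms of the discrete inner product \eqref{equ:discreteinner}, obtaining
\begin{equation*}
\tfrac{1}{2}\sum_{j=1}^N w_j\bigl[p(\mathbf{x}_j)-f(\mathbf{x}_j)\bigr]^2 = \tfrac{1}{2}\langle p,p\rangle_N - \langle p,f\rangle_N + \tfrac{1}{2}\langle f,f\rangle_N.
\end{equation*}
Substituting the basis expansion and using bilinearity gives $\langle p,p\rangle_N = \sum_{\ell,\ell'} \alpha_\ell \alpha_{\ell'}\langle \Phi_\ell,\Phi_{\ell'}\rangle_N$ and $\langle p,f\rangle_N = \sum_\ell \alpha_\ell \langle \Phi_\ell, f\rangle_N$.

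The central step is then to invoke the exactness identity \eqref{equ:kroneckerdiscrete}: since every product $\Phi_\ell \Phi_{\ell'}$ lies in $\mathbb{P}_{2L}(\Omega)$ and the quadrature has algebraic degree of exactness $2L$, one has $\langle \Phi_\ell,\Phi_{\ell'}\rangle_N = \langle \Phi_\ell,\Phi_{\ell'}\rangle = \delta_{\ell\ell'}$ by \eqref{equ:kronecker}. Hence the objective collapses to
\begin{equation*}
\tfrac{1}{2}\sum_{\ell=1}^d \alpha_\ell^2 - \sum_{\ell=1}^d \alpha_\ell \langle f,\Phi_\ell\rangle_N + \tfrac{1}{2}\langle f,f\rangle_N = \tfrac{1}{2}\sum_{\ell=1}^d \bigl(\alpha_\ell - \langle f,\Phi_\ell\rangle_N\bigr)^2 + C,
\end{equation*}
with $C$ independent of $\bm{\alpha}$. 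This is a sum of decoupled squares, uniquely minimized at $\alpha_\ell^\star = \langle f,\Phi_\ell\rangle_N$ for each $\ell$, which is precisely the definition \eqref{equ:hyper} of $\hyper{f}$. Equivalently, in the matrix form \eqref{HyperLeastApproxMatrix} the normal equations read $\mathbf{A}^{\mathrm T}\mathbf{W}\mathbf{A}\,\bm{\alpha} = \mathbf{A}^{\mathrm T}\mathbf{W}\mathbf{f}$, and the same exactness argument shows $\mathbf{A}^{\mathrm T}\mathbf{W}\mathbf{A} = \mathbf{I}_d$, so the unique solution is $\bm{\alpha}^\star = \mathbf{A}^{\mathrm T}\mathbf{W}\mathbf{f}$ with entries $\langle f,\Phi_\ell\rangle_N$.

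There is no real obstacle in this argument; the only subtlety worth stating carefully is the justification for applying \eqref{equ:kroneckerdiscrete} to $\Phi_\ell\Phi_{\ell'}$, which relies on the PI-type assumption (positive weights, interior nodes) being used solely to ensure well-posedness of the discrete inner product, while the key mechanism is the algebraic exactness of degree $2L$. Uniqueness follows from strict convexity of the reduced sum of squares, or equivalently from invertibility (in fact, the identity) of the Gram matrix $\mathbf{A}^{\mathrm T}\mathbf{W}\mathbf{A}$.
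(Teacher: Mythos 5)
Your proposal is correct and follows essentially the same route the paper relies on: the paper cites Sloan's Lemma 5 without reproving it, but the identical mechanism --- quadrature exactness of degree $2L$ giving $\langle \Phi_\ell,\Phi_{\ell'}\rangle_N=\delta_{\ell\ell'}$, i.e.\ $\mathbf{A}^{\mathrm T}\mathbf{W}\mathbf{A}=\mathbf{I}_d$, so the objective decouples and is uniquely minimized at $\alpha_\ell=\langle f,\Phi_\ell\rangle_N$ --- is exactly what appears in the paper's proof of Theorem \ref{thm:HybridHyperSol}. Your completion-of-the-square presentation and the normal-equations presentation are two phrasings of the same argument, and both are complete.
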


%%%%%%%%%%%%%%%%%%%%%%%%%%%%%%%%%%%%%%%%%%%%%%%%%%%%%%%%%%%%%%%%%%%%%%%%%%%%%%%%%%%%%%%%%%%%%%%%%%%%%%%%%%%%%%%%%%%%

%%%%%%%%%%%%%%%%%%%%%%%%%%%%%%%%%%%%%%%%%%%%%%%%%%%%%%%%%%%%%%%%%%%%%%%%%%%%%%%%%%%%%%%%%%%%%%%%%%%%%%%%%%%%%%%%%%%%
\subsection{Filtered hyperinterpolation}\label{sec:filteredhyperinterpolation}
Filtered hyperinterpolation was introduced by Sloan and Womersley on the unit sphere \cite{sloan2012filtered}, all coefficients of the hyperinterpolant are filtered by a filter function $h(x)$. To this purpose, we introduce a {\it{filter function}} $h \in \mathcal{C}[0,+\infty)$ that satisfies
\begin{eqnarray*}
h(x)=\begin{cases}
1, &\text{ for }x\in[0,1/2],\\
0, &\text{ for }x\in[1,\infty).
\end{cases}
\end{eqnarray*}
Depending on the behaviour of $h$ in $[1/2,1]$, one can define many filters. Many examples were described in \cite{sloan2011polynomial}.  In this paper, we only consider the filtered function $h(x)$ defined by
\begin{eqnarray}\label{equ:filtered}
h(x)=\begin{cases}
    1, & x \in [0,\frac{1}{2}] ,\\
\sin^2(\pi x), & x \in [\frac{1}{2},1].
\end{cases}
\end{eqnarray}

In what follows, we denote by $\lfloor \cdot \rfloor$ the floor function. Once a filter has been chosen, one can introduce the filtered hyperinterpolation as follows:

\begin{definition}[\cite{sloan2012filtered}]
Suppose that the discrete scalar product $\langle f,g \rangle_N$ in \eqref{equ:discreteinner} is determined by an $N$-point quadrature rule of PI-type in $\Omega$ with algebraic degree of exactness  $L-1+\lfloor L/2 \rfloor$. The \emph{filtered hyperinterpolant} $\mathcal{F}_{L,N}{f} \in \mathbb{P}_{L-1}(\Omega)$ of $f$ is defined as
\begin{eqnarray}\label{equ:filteredhyper}
    \mathcal{F}_{L,N}{f}:=\sum_{\ell=1}^{d} h\left(\frac{\deg \Phi_{\ell}}{L}\right) \langle f,\Phi_{\ell} \rangle_N \Phi_{\ell},
\end{eqnarray}
where $d={\mbox{dim}}({\mathbb{P}}_L(\Omega))$ and $h$ is a filter function defined by \eqref{equ:filtered}.
\end{definition}

For simplicity, we use $h_{\ell}$ instead of $h(\deg \Phi_{\ell}/L)$ in this paper. Notice that by \eqref{equ:filteredhyper}, depending on the fact $\overline{\text{supp} (h)}=[0,1]$, with respect to the classical hyperinterpolation, one can achieve more {\it{sparsity}} in the polynomial coefficients in view of the term $h_{\ell}$, i.e. some less relevant discrete Fourier coefficients are dismissed. Indeed, if $\deg \Phi_{\ell} \geq L$ then $h_{\ell} =0$.

In view of the assumption on the algebraic degree of precision, from \eqref{equ:filteredhyper}, we easily have $\filteredhyper{f}=f$ for all $f\in\mathbb{P}_{\lfloor L/2\rfloor}(\Omega)$.

Recently, it was shown in \cite{lin2021distributed,Montúfar2022distributed} that (distributed) filtered hyperinterpolation can reduce weak noise on spherical functions.
%%%%%%%%%%%%%%%%%%%%%%%%%%%%%%%%%%%%%%%%%%%%%%%%%%%%%%%%%%%%%%%%%%%%%%%%%%%%%%%%%%%%%%%%%%%%%%%%%%%%%%%%%%%%%%%%%%%%

%%%%%%%%%%%%%%%%%%%%%%%%%%%%%%%%%%%%%%%%%%%%%%%%%%%%%%%%%%%%%%%%%%%%%%%%%%%%%%%%%%%%%%%%%%%%%%%%%%%%%%%%%%%%%%%%%%%%
\subsection{Lasso hyperinterpolation}\label{sec:lassohyperinterpolation}
An alternative to filtered hyperinterpolation is the so-called {\it{Lasso hyperinterpolation}} \cite{an2021lasso}, which aims to denoise and perform feature selection by discarding less relevant discrete Fourier coefficients. Lasso hyperinterpolation employs a soft thresholding operator, defined as follows, to select hyperinterpolation coefficients whose absolute values do not exceed predetermined threshold values.
\begin{definition}[\cite{donoho1994ideal}]\label{def:soft}
The {\it{soft thresholding operator}} is defined as
\begin{eqnarray}\label{equ:soft}
    \mathcal{S}_{k}(a)=\begin{cases}
        a+k, & {\mbox{ if }} a < -k ,\\
0,  & {\mbox{ if }} -k \leq a \leq k, \\
a-k, & {\mbox{ if }}  a > k, \\
    \end{cases}
\end{eqnarray}
where $k \geq 0$.
\end{definition}

Suppose that the discrete scalar product $\langle f,g \rangle_N$ in \eqref{equ:discreteinner} is determined by an $N$-point quadrature rule of PI-type in $\Omega$ with algebraic degree of exactness $2L$.
Then the \emph{Lasso hyperinterpolation} of $f$ onto $\mathbb{P}_{L}(\Omega)$ is defined as
\begin{eqnarray}\label{equ:lassohyper}
    \mathcal{L}_L^{\lambda} f:= \sum_{\ell=1}^{d} {\mathcal{S}}_{\lambda \mu_{\ell}} (\langle f,\Phi_{\ell} \rangle_N) \Phi_{\ell},
\end{eqnarray}
where $\lambda>0$ is the regularization parameter and $\{\mu_{\ell}\}_{\ell=1}^{d}$ is a set of positive penalty parameters. The effect of the soft threshold operator is such that if $|\langle f,\Phi_{\ell} \rangle_N| \leq \lambda \mu_{\ell}$ then ${\mathcal{S}}_{\lambda \mu_{\ell}} (\langle f,\Phi_{\ell} \rangle_N)=0$.

Now we introduce the $\ell_1$-regularized least squares problem
\begin{eqnarray}\label{L1HyperLeastApprox}
  \min\limits_{p_{\lambda} \in \mathbb{P}_L(\Omega)}  \left\{ \frac{1}{2}\sum_{j=1}^{N} w_j[ p_{\lambda}(\mathbf{x}_j) - f(\mathbf{x}_j)]^2  + \lambda\sum_{\ell=1}^{d}\mu_{\ell}|\gamma_{\ell}^{\lambda}| \right\}
\end{eqnarray}
with $p_{\lambda}(\mathbf{x})=\sum_{\ell=1}^d \gamma_{\ell}^{\lambda} \Phi_{\ell}(\mathbf{x}) \in \mathbb{P}_L(\Omega)$, or equivalently
\begin{eqnarray}\label{L1HyperLeastApproxMatrix}
    \min\limits_{\bm{\gamma}^{\lambda} \in \mathbb{R}^d}~~ \frac{1}{2}\|\mathbf{W}^{1/2}(\mathbf{A}\bm{\gamma}^{\lambda} - \mathbf{f}) \|_2^2  + \lambda \| \mathbf{R}_1\bm{\gamma}^{\lambda}\|_1 , \qquad \lambda>0,
\end{eqnarray}
where $\bm{\gamma}^{\lambda}=[\gamma_1^{\lambda},\ldots,\gamma_d^{\lambda}]^{\text{T}}\in\mathbb{R}^d$ and $\mathbf{R}_1 = \text{diag}(\mu_1, \ldots, \mu_d)$. In this paper, the subscript and superscript in $p_{\lambda}$ and $\bm{\gamma}^{\lambda}$ mean that their concrete forms are related to the regularization parameter $\lambda$. The following result holds.
\begin{theorem}[Theorem 3.4 in \cite{an2021lasso}]\label{thm:lassohypersol}
Let $\lassohyper{f} \in \mathbb{P}_L(\Omega)$ be defined by \eqref{equ:lassohyper}, and adopt conditions of Lemma \ref{prop:HyperLeastApprox}. Then $\lassohyper{f}$ is the solution to the regularized least squares approximation problem \eqref{L1HyperLeastApprox}.
\end{theorem}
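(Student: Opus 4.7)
The plan is to expand the squared-error term using the discrete inner product, invoke exactness to replace it with an $L^2$ inner product on $\mathbb{P}_L(\Omega)$, and thereby decouple the optimization into $d$ independent scalar subproblems whose closed-form solutions are precisely the soft thresholds.

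First I would rewrite the data-fidelity term in \eqref{L1HyperLeastApprox} as
\begin{equation*}
\frac{1}{2}\sum_{j=1}^{N} w_j[p_{\lambda}(\mathbf{x}_j) - f(\mathbf{x}_j)]^2
= \frac{1}{2}\discreteinner{p_{\lambda},p_{\lambda}} - \discreteinner{p_{\lambda},f} + \frac{1}{2}\discreteinner{f,f},
\end{equation*}
using the notation of \eqref{equ:discreteinner}. Because $p_{\lambda}\in\mathbb{P}_L(\Omega)$, the product $p_{\lambda}p_{\lambda}$ lies in $\mathbb{P}_{2L}(\Omega)$, so by the exactness \eqref{equ:kroneckerdiscrete} together with orthonormality \eqref{equ:kronecker},
\begin{equation*}
\discreteinner{p_{\lambda},p_{\lambda}} = \inner{p_{\lambda},p_{\lambda}} = \sum_{\ell=1}^{d} (\gamma_{\ell}^{\lambda})^2.
\end{equation*}
Expanding $\discreteinner{p_{\lambda},f} = \sum_{\ell=1}^{d}\gamma_{\ell}^{\lambda}\discreteinner{f,\Phi_{\ell}}$ and dropping the constant $\frac{1}{2}\discreteinner{f,f}$ (independent of $\bm{\gamma}^{\lambda}$), the objective becomes
\begin{equation*}
\sum_{\ell=1}^{d}\left\{\frac{1}{2}(\gamma_{\ell}^{\lambda})^2 - \gamma_{\ell}^{\lambda}\discreteinner{f,\Phi_{\ell}} + \lambda\mu_{\ell}|\gamma_{\ell}^{\lambda}|\right\}.
\end{equation*}

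This sum is separable in the coordinates, so the minimization reduces to $d$ independent scalar problems of the form $\min_{t\in\mathbb{R}}\{\frac{1}{2}t^2 - at + \kappa|t|\}$ with $a=\discreteinner{f,\Phi_{\ell}}$ and $\kappa=\lambda\mu_{\ell}\geq 0$. For each such subproblem I would appeal to the subdifferential optimality condition: since the objective is strictly convex in $t$, the unique minimizer $t^\star$ satisfies $0\in t^\star - a + \kappa\,\partial|t^\star|$, where $\partial|t|=\{\mathrm{sign}(t)\}$ for $t\neq 0$ and $\partial|0|=[-1,1]$. A short case analysis on whether $a>\kappa$, $a<-\kappa$, or $|a|\leq\kappa$ yields exactly the piecewise formula \eqref{equ:soft}, so $t^\star=\mathcal{S}_{\kappa}(a)$.

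Setting $\gamma_{\ell}^{\lambda} = \mathcal{S}_{\lambda\mu_{\ell}}(\discreteinner{f,\Phi_{\ell}})$ for each $\ell$ and reassembling $p_{\lambda} = \sum_{\ell=1}^{d}\gamma_{\ell}^{\lambda}\Phi_{\ell}$ gives precisely $\lassohyper{f}$ as defined in \eqref{equ:lassohyper}, proving both existence and uniqueness of the solution. The only genuinely delicate point is the separability reduction, which hinges on the quadrature exactness upgrading $\discreteinner{p_{\lambda},p_{\lambda}}$ to a clean $\ell^2$ norm of coefficients; beyond that, everything is standard scalar soft-thresholding.
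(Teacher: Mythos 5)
Your proof is correct and follows essentially the same route the paper uses for the analogous hybrid result (Theorem \ref{thm:HybridHyperSol}); the paper itself only cites Theorem \ref{thm:lassohypersol} from the Lasso hyperinterpolation paper without reproving it. The key steps are identical — quadrature exactness plus orthonormality turn the discrete Gram matrix into the identity (your coefficient-level computation of $\discreteinner{p_\lambda,p_\lambda}$ is the functional form of the paper's $\mathbf{A}^{\mathrm{T}}\mathbf{W}\mathbf{A}=\mathbf{I}_d$), the objective separates into $d$ scalar problems, and the subdifferential case analysis yields the soft threshold; your argument is just the $b_\ell=0$ specialization of the paper's.
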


%As observed in \cite{an2021lasso}, Lasso hyperinterpolation is not invariant under a change of basis and differently, from the previous approaches, is not in general a projection to a certain polynomial space.

%It has been shown how Lasso hyperinterpolation operator ${\mathcal{L}}^{\lambda}_L$ can reduce some stability estimates with respect to ${\cal{L}}_L$ in \cite[Theorem 4.4]{an2021lasso} as well as the error related to noise in \cite[Theorem 4.6]{an2021lasso}. Numerical experiments on compact domains $\Omega$, as interval, unit disk, sphere and cube have compared the effectiveness of this approach with respect to the previous techniques.
%%%%%%%%%%%%%%%%%%%%%%%%%%%%%%%%%%%%%%%%%%%%%%%%%%%%%%%%%%%%%%%%%%%%%%%%%%%%%%%%%%%%%%%%%%%%%%%%%%%%%%%%%%%%%%%%%%%%

%%%%%%%%%%%%%%%%%%%%%%%%%%%%%%%%%%%%%%%%%%%%%%%%%%%%%%%%%%%%%%%%%%%%%%%%%%%%%%%%%%%%%%%%%%%%%%%%%%%%%%%%%%%%%%%%%%%%

%%%%%%%%%%%%%%%%%%%%%%%%%%%%%%%%%%%%%%%%%%%%%%%%%%%%%%%%%%%%%%%%%%%%%%%%%%%%%%%%%%%%%%%%%%%%%%%%%%%%%%%%%%%%%%%%%%%%

%%%%%%%%%%%%%%%%%%%%%%%%%%%%%%%%%%%%%%%%%%%%%%%%%%%%%%%%%%%%%%%%%%%%%%%%%%%%%%%%%%%%%%%%%%%%%%%%%%%%%%%%%%%%%%%%%%%%
\section{Hybrid hyperinterpolation}\label{sec:hybridhyperinterpolation}
Here, we introduce a novel approach called  \emph{hybrid hyperinterpolation}, i.e., a composition of filtered hyperinterpolation and Lasso hyperinterpolation.  Specifically, hybrid hyperinterpolation corresponds to solving an $\ell^2_2+\ell_1$-regularized least approximation problem.

In practice, the sampling data $\{f(\mathbf{x}_j)\}_{j=1}^{N}$ at nodes $\{\mathbf{x}_j\}_{j=1}^{N}$ are perturbed by noise $\epsilon$. We usually obtain the data $\{\mathbf{x}_j, f^{\epsilon}(\mathbf{x}_j)\}_{j=1}^{N}$ with $f^{\epsilon}(\mathbf{x}_j)=f(\mathbf{x}_j)+ \epsilon(\mathbf{x}_j)$, and actually solve the following $\ell^2_2+\ell_1$-regularized least approximation problem
\begin{equation}\label{L22L1HyperLeastApprox}
    \min\limits_{p_{\lambda} \in \mathbb{P}_L(\Omega)}  \left\{ \frac{1}{2}\summ{j}{N} w_j[p_{\lambda}(\mathbf{x}_j) - f^{\epsilon}(\mathbf{x}_j)]^2
+ \frac{1}{2}\summ{j}{N} w_j[ \mathcal{R}_{L}{p_{\lambda}(\mathbf{x}_j)}]^2 + \lambda\summ{\ell}{d}\mu_{\ell}|\beta_{\ell}^{\lambda}| \right\},
\end{equation}
where $\lambda >0$, $\mu_1,\ldots,\mu_d >0$ and $p_{\lambda}(\mathbf{x})=\sum_{\ell=1}^d \beta_{\ell}^{\lambda} \Phi_{\ell}(\mathbf{x}) \in \mathbb{P}_L(\Omega)$ and
\begin{eqnarray}\label{R1}
\mathcal{R}_{L}{p_{\lambda}(\mathbf{x})} = \sum_{\ell=1}^{d} b_{\ell} \langle{\Phi_{\ell}, p_{\lambda}} \rangle_N \Phi_{\ell} = \sum_{\ell=1}^{d} b_{\ell} \beta_{\ell}^{\lambda} \Phi_{\ell}(\mathbf{x})
\end{eqnarray}
with
\begin{eqnarray}\label{coef:b}
    b_{\ell} = \begin{cases}
        0, & \frac{\deg \Phi_{\ell}}{L} \in [0,\frac{1}{2}] ,\\
\sqrt{\frac{1}{h_{\ell}} -1}, & \frac{\deg \Phi_{\ell}}{L} \in [\frac{1}{2},1),
    \end{cases}
\end{eqnarray}
in which $h$ is a function suitable for filtered hyperinterpolation defined by \eqref{equ:filtered}. Note that in \eqref{coef:b} we have excluded $\deg \Phi_{\ell} /L=1$ because we would have $b_{\ell}=\infty$ and hence $\beta^{\lambda}_{\ell}=0$ in \eqref{coef:beta} in this case.

It is not difficult to see that \eqref{L22L1HyperLeastApprox} is equivalent to

\begin{eqnarray}\label{L22L1HyperLeastApproxMatrix}
    \min\limits_{\bm{\beta}^{\lambda} \in \mathbb{R}^d}~~ \left\{ \frac{1}{2}\|\mathbf{W}^{1/2}(\mathbf{A}\bm{\beta}^{\lambda} - \mathbf{f}^{\epsilon}) \|_2^2  + \frac{1}{2}\|\mathbf{W}^{1/2}\mathbf{R}_2\bm{\beta}^{\lambda}\|_2^2 + \lambda \| \mathbf{R}_1\bm{\beta}^{\lambda}\|_1  \right\},
\end{eqnarray}
where $\bm{\beta}^{\lambda}=[\beta_1^{\lambda},\ldots,\beta_d^{\lambda}]^{\text{T}}\in\mathbb{R}^d$ and $\mathbf{R}_1 = \text{diag}(\mu_1, \ldots, \mu_d)$, $\mathbf{f}^{\epsilon}=[f^{\epsilon}(\mathbf{x}_1), \ldots, f^{\epsilon}(\mathbf{x}_N)]^{\rm{T}} \in \mathbb{R}^{N}$, $\mathbf{R}_2=\mathbf{AB} \in \mathbb{R}^{N \times d}$ with $\mathbf{A}=(\Phi_{\ell}(\mathbf{x}_j)) \in \mathbb{R}^{N \times d}$ and $\mathbf{B}= \text{diag}(b_{1},\ldots, b_d), \mathbf{W}= \text{diag}(w_{1},\ldots, w_N)$.

 We define the \emph{hybrid hyperinterpolation} as follows.

\begin{definition}[Hybrid hyperinterpolation]
Let $\Omega \subset {\mathbb{R}}^s$  be a compact domain and $f \in \mathcal{C}(\Omega)$.
Suppose that the discrete scalar product $\langle f,g \rangle_N$ in (\ref{equ:discreteinner}) is determined by an $N$-point quadrature rule of PI-type in $\Omega$ with algebraic degree of exactness $2L$. The \emph{hybrid hyperinterpolation} of $f$ onto $\mathbb{P}_{L}(\Omega)$ is defined as
\begin{eqnarray}\label{equ:hybridhyper}
  \mathcal{H}_{L}^{\lambda}{f}:=\sum_{\ell=1}^{d} h\left(\frac{\deg \Phi_{\ell}}{L} \right)\mathcal{S}_{\lambda \mu_{\ell}} (\langle{f, \Phi_{\ell}} \rangle_N)\Phi_{\ell}, \qquad \lambda>0,
\end{eqnarray}
where $h(\cdot)$ is a filter function defined by \eqref{equ:filtered}, $\{\mathcal{S}_{\lambda \mu_{\ell}}(\cdot)\}_{\ell=1}^{d}$ are soft thresholding operators and $\mu_1,\ldots,\mu_d >0$.
\end{definition}

Then we obtain the following important result.
\begin{theorem}\label{thm:HybridHyperSol}
Let $\Omega \subset {\mathbb{R}}^s$ be a compact domain and $f \in \mathcal{C}(\Omega)$.
Suppose that the discrete scalar product $\langle f,g \rangle_N$ in \eqref{equ:discreteinner} is determined by an $N$-point quadrature rule of PI-type in $\Omega$ with algebraic degree of exactness $2L$. Next let $\mu_1,\ldots,\mu_d >0$, ${\cal{R}}_L$ as in \eqref{R1} and $\hybridhyper{f} \in \mathbb{P}_L(\Omega)$ be defined by \eqref{equ:hybridhyper}. Then $\hybridhyper{f}$ is the solution to the regularized least squares approximation problem \eqref{L22L1HyperLeastApprox} with the noise $\epsilon \equiv 0$.
\end{theorem}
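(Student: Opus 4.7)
The plan is to reduce the finite-dimensional problem \eqref{L22L1HyperLeastApproxMatrix} to $d$ decoupled scalar problems by exploiting orthonormality of $\{\Phi_\ell\}$ under the discrete inner product, and then solve each scalar problem via the subdifferential of the absolute value, recognizing the soft thresholding operator. The key structural fact driving everything is that, since $\Phi_\ell\Phi_{\ell'}\in\mathbb{P}_{2L}(\Omega)$, the quadrature exactness of degree $2L$ together with \eqref{equ:kronecker} gives $\langle\Phi_\ell,\Phi_{\ell'}\rangle_N=\delta_{\ell\ell'}$.

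First I would rewrite the objective $\Phi(\bm{\beta}^\lambda)$ in the form $\tfrac12\langle p_\lambda-f,p_\lambda-f\rangle_N+\tfrac12\langle\mathcal{R}_L p_\lambda,\mathcal{R}_L p_\lambda\rangle_N+\lambda\sum_\ell\mu_\ell|\beta_\ell^\lambda|$. Expanding and using $\langle\Phi_\ell,\Phi_{\ell'}\rangle_N=\delta_{\ell\ell'}$ together with \eqref{R1}, the first two terms collapse to
\begin{eqnarray*}
\sum_{\ell=1}^d\Bigl[\tfrac12(1+b_\ell^2)(\beta_\ell^\lambda)^2-\beta_\ell^\lambda\langle f,\Phi_\ell\rangle_N\Bigr]+\tfrac12\langle f,f\rangle_N,
\end{eqnarray*}
and the last summand is a constant independent of $\bm{\beta}^\lambda$. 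Thus $\Phi$ decouples into a sum of independent scalar functions
\begin{eqnarray*}
g_\ell(\beta)=\tfrac12(1+b_\ell^2)\beta^2-\langle f,\Phi_\ell\rangle_N\,\beta+\lambda\mu_\ell|\beta|,\qquad \ell=1,\ldots,d.
\end{eqnarray*}

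Next I would minimize each $g_\ell$ separately. Since $1+b_\ell^2>0$, $g_\ell$ is strictly convex, so a unique minimizer exists and is characterized by $0\in\partial g_\ell(\beta)$. Splitting the three cases $\beta>0$, $\beta<0$, $\beta=0$ and using $\partial|\beta|=[-1,1]$ at $0$, a direct subgradient computation gives
\begin{eqnarray*}
\beta_\ell^{\lambda,\star}=\frac{\mathcal{S}_{\lambda\mu_\ell}\!\left(\langle f,\Phi_\ell\rangle_N\right)}{1+b_\ell^2},
\end{eqnarray*}
with $\mathcal{S}_{\lambda\mu_\ell}$ as in \eqref{equ:soft}.

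Finally I would match this expression to the hybrid hyperinterpolant \eqref{equ:hybridhyper}. From \eqref{coef:b}, when $\deg\Phi_\ell/L\in[0,\tfrac12]$ we have $b_\ell=0$ and $h_\ell=1$, so $1/(1+b_\ell^2)=1=h_\ell$; when $\deg\Phi_\ell/L\in[\tfrac12,1)$ we have $b_\ell^2=1/h_\ell-1$, so $1+b_\ell^2=1/h_\ell$ and again $1/(1+b_\ell^2)=h_\ell$. Hence $\beta_\ell^{\lambda,\star}=h_\ell\,\mathcal{S}_{\lambda\mu_\ell}(\langle f,\Phi_\ell\rangle_N)$, which is precisely the coefficient of $\Phi_\ell$ in $\mathcal{H}_L^\lambda f$. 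The excluded case $\deg\Phi_\ell/L=1$ corresponds to $h_\ell=0$, which is consistent with the convention, already noted after \eqref{coef:b}, that the associated coefficient vanishes. The only step that requires a little care is the subgradient case analysis for $g_\ell$ at $\beta=0$; the rest is bookkeeping. Uniqueness of the minimizer of \eqref{L22L1HyperLeastApprox} then follows from strict convexity of each $g_\ell$.
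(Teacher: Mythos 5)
Your proposal is correct and follows essentially the same route as the paper: both exploit $\langle\Phi_\ell,\Phi_{\ell'}\rangle_N=\delta_{\ell\ell'}$ (equivalently $\mathbf{A}^{\mathrm{T}}\mathbf{W}\mathbf{A}=\mathbf{I}_d$) to decouple the objective into $d$ scalar problems, solve each by the subdifferential case analysis yielding $\mathcal{S}_{\lambda\mu_\ell}(\langle f,\Phi_\ell\rangle_N)/(1+b_\ell^2)$, and identify $1/(1+b_\ell^2)=h_\ell$. The only difference is cosmetic — you decouple the objective before writing the optimality condition, while the paper writes the vector first-order inclusion and then decouples it — and your explicit remark on strict convexity of each $g_\ell$ cleanly justifies uniqueness.
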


\begin{proof}
Since the problem \eqref{HyperLeastApproxMatrix} is a strictly convex problem, the stationary point of the objective in \eqref{HyperLeastApproxMatrix} with respect to $\bm{\alpha}$ leads to the first-order condition
\begin{eqnarray}\label{equ:firstorder}
    \mathbf{A}^{\text{T}}\mathbf{WA}\bm{\alpha}-\mathbf{A}^{\text{T}}\mathbf{Wf}=\mathbf{0}.
\end{eqnarray}
Note that $\mathbf{A}^{\text{T}}\mathbf{WA}=\mathbf{I}_{d}$, where $\mathbf{I}_d \in {\mathbb R}^{d \times d}$ is the identity matrix, since
\begin{equation}\label{equ:identity}
[\mathbf{A}^{\text{T}}\mathbf{WA}]_{ik} =\summ{j}{N} w_j\Phi_{i}(\mathbf{x}_j)\Phi_{k}(\mathbf{x}_j)=\discreteinner{\Phi_i,\Phi_k} =\inner{\Phi_{i},\Phi_{k}}=\delta_{ik}. \quad 1\leq i,k \leq d.
\end{equation}
By \eqref{equ:firstorder} and \eqref{equ:identity}, we have $\bm{\alpha}=\mathbf{A}^{\text{T}}\mathbf{Wf}$. Then $\bm{\beta}^{\lambda}=[\beta_{1}^{\lambda},\ldots,\beta_{d}^{\lambda}]^{\text{T}}$ is a solution to the problem \eqref{L22L1HyperLeastApproxMatrix} if and only if
\begin{eqnarray}\label{equ:exactfirstorder}
   \textbf{0} \in  \mathbf{A}^{\text{T}}\mathbf{WA}\bm{\beta}^{\lambda} - \mathbf{A}^{\text{T}}\mathbf{Wf} + \lambda \partial (\|\mathbf{R}_1\bm{\beta}^{\lambda}\|_1) +  \mathbf{B}^{\text{T}}\mathbf{A}^{\text{T}}\mathbf{W}\mathbf{AB}\bm{\beta}^{\lambda},
\end{eqnarray}
where $\partial(\cdot)$ denotes the subdifferential \cite[Definition 3.2]{beck2017optimization}.

Thus the first-order condition \eqref{equ:exactfirstorder} amounts to $d$ one-dimensional problems
\begin{eqnarray}\label{equ:1dexactfirstorder}
   0 \in \beta_{\ell}^{\lambda} - \alpha_{\ell} + \lambda\mu_{\ell} \partial(|\beta_{\ell}^{\lambda}|) +  b_{\ell}^2\beta_{\ell}^{\lambda}, \quad \forall \ell=1,\ldots, d,
\end{eqnarray}
where
\begin{eqnarray*}
     \partial(|\beta_{\ell}^{\lambda}|)= \begin{cases}
         1, & \text{ if } \beta_{\ell}^{\lambda} > 0, \\
\in [-1,1], & \text{ if } \beta_{\ell}^{\lambda} = 0, \\
-1, & \text{ if } \beta_{\ell}^{\lambda} < 0.
     \end{cases}
\end{eqnarray*}

Let $\bm{\beta}^{\lambda}=[\beta_{1}^{\lambda},\ldots,\beta_{d}^{\lambda}]^{\text{T}}$ be the optimal solution to the problem \eqref{L22L1HyperLeastApproxMatrix}. Then we have
\begin{eqnarray*}
    \beta_{\ell}^{\lambda}= \frac{1}{1+b_{\ell}^2}  [\alpha_{\ell} - \lambda\mu_{\ell} \partial(|\beta_{\ell}^{\lambda}|)], \quad \forall \ell=1,\ldots,d.
\end{eqnarray*}
In fact, there are three cases we need to consider:
\begin{enumerate}
\item[(i)] If $\alpha_{\ell} > \lambda\mu_{\ell}$, then $\alpha_{\ell} - \lambda\mu_{\ell} \partial(|\beta_{\ell}^{\lambda}|)>0$; thus, $\beta_{\ell}^{\lambda} >0$, yielding $\partial(|\beta_{\ell}^{\lambda}|)=1$, and then $\beta_{\ell}^{\lambda} = (\alpha_{\ell} - \lambda\mu_{\ell})/(1+b_{\ell}^2) >0 $.
\item[(ii)] If $\alpha_{\ell} < -\lambda\mu_{\ell}$, then $\alpha_{\ell} - \lambda\mu_{\ell} \partial(|\beta_{\ell}^{\lambda}|)<0$; thus, $\beta_{\ell}^{\lambda} <0$, yielding $\partial(|\beta_{\ell}^{\lambda}|)=-1$, and then $\beta_{\ell}^{\lambda} = (\alpha_{\ell} + \lambda\mu_{\ell})/(1+b_{\ell}^2) <0 $.
\item[(iii)] If $-\lambda\mu_{\ell} \leq \alpha_{\ell} \leq \lambda\mu_{\ell} $, then $\beta_{\ell}^{\lambda}>0$ means that $\beta_{\ell}^{\lambda}= (\alpha_{\ell} - \lambda\mu_{\ell})/(1+b_{\ell}^2) <0 $, and $\beta_{\ell}^{\lambda}<0$ means that $\beta_{\ell}^{\lambda}=(\alpha_{\ell} + \lambda\mu_{\ell})/(1+b_{\ell}^2)>0$. Thus, we get two contradictions and $\beta_{\ell}^{\lambda}=0$.
\end{enumerate}

Recall that $\alpha_{\ell}=\discreteinner{f,\Phi_{\ell}} $ and $b_{\ell}$ defined by \eqref{coef:b} for all $\ell=1,\ldots,d$, we deduce that the polynomial constructed with coefficients
\begin{eqnarray}\label{coef:beta}
  \beta_{\ell}^{\lambda} = \frac{\mathcal{S}_{\lambda\mu_{\ell}} (\langle{f,\Phi_{\ell}} \rangle_N)}{1+b_{\ell}^2} =h_{\ell}\mathcal{S}_{\lambda\mu_{\ell}} (\langle{f,\Phi_{\ell}} \rangle_N)
\end{eqnarray}
is hybrid hyperinterpolation $\mathcal{H}_{L}^{\lambda}{f}$.
\end{proof}

\begin{remark}
It is clear that hybrid hyperinterpolation $\mathcal{H}_{L}^{\lambda} f$ can be regarded as the composite of filtered hyperinterpolation $\mathcal{F}_{L,N} f$ and Lasso hyperinterpolation $\mathcal{L}_{L}^{\lambda} f$, that is,
\begin{eqnarray*}
     \mathcal{H}_{L}^{\lambda} f= \mathcal{F}_{L,N}(\mathcal{L}_{L}^{\lambda}f).
\end{eqnarray*}
Observe that, since we suppose that $h:[0,1] \rightarrow [0,1]$, necessarily
\begin{eqnarray*}
    |h_{\ell} \langle{f,\Phi_{\ell}} \rangle_N| \leq |\langle{f,\Phi_{\ell}} \rangle_N|,
\end{eqnarray*}
the commutative property between $\mathcal{F}_{L,N} f$ and $\mathcal{L}_{L}^{\lambda} f$ does not hold in general, i.e., it may be
\begin{eqnarray*}
    \mathcal{F}_{L,N}(\mathcal{L}_{L}^{\lambda}f) \neq \mathcal{L}_{L}^{\lambda}( \mathcal{F}_{L,N} f) .
\end{eqnarray*}
\end{remark}
%%%%%%%%%%%%%%%%%%%%%%%%%%%%%%%%%%%%%%%%%%%%%%%%%%%%%%%%%%%%%%%%%%%%%%%%%%%%%%%%%%%%%%%%%%%%%%%%%%%%%%%%%%%%%%%%%%%%

%%%%%%%%%%%%%%%%%%%%%%%%%%%%%%%%%%%%%%%%%%%%%%%%%%%%%%%%%%%%%%%%%%%%%%%%%%%%%%%%%%%%%%%%%%%%%%%%%%%%%%%%%%%%%%%%%%%%
\section{Error analysis in the \texorpdfstring{$L^2(\Omega)$}{}-norm} \label{sec:L2NormError}
In the following, the norms which appear in the theorems are defined by
\begin{equation*}
\begin{aligned}
\|g\|_2 &:= \left( \int_\Omega g^2 \text{d}\omega\right)^{1/2}, \quad g \in L^2(\Omega),\\
\|g\|_{\infty} &:= \sup\limits_{\mathbf{x} \in \Omega} |g(\mathbf{x})|,  \qquad \: \:\:g \in \mathcal{C}(\Omega).
\end{aligned}
\end{equation*}
For $g \in \mathcal{C}(\Omega)$, let $p^{\ast} \in \mathbb{P}_{n}(\Omega)$ be the best approximation of $g$ in $\mathbb{P}_{n}(\Omega)$, i.e.,
\begin{equation*}
    E_{n}(g):=\inf_{p \in \mathbb{P}_{n}(\Omega)} \| g-p\|_{\infty} = \| g-p^{\ast}\|_{\infty}.
\end{equation*}
\begin{lemma}[Theorem 1 in \cite{sloan1995polynomial}]\label{lem:HyperErrorL2norm}
Under the conditions of Lemma \ref{prop:HyperLeastApprox}. Then
\begin{eqnarray}\label{equ:stability}
    \| \mathcal{L}_L f \|_2 \leq V^{1/2} \|f \|_{\infty},
\end{eqnarray}
and
\begin{eqnarray}\label{equ:error}
 \| \mathcal{L}_L f -f \|_2 \leq2V^{1/2}E_L(f).
\end{eqnarray}
Thus $ \| \mathcal{L}_L f -f \|_2\to0$ as $L\to\infty$.
\end{lemma}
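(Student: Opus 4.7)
My plan is to prove the two inequalities separately. For the stability estimate \eqref{equ:stability}, I would exploit the $L^2$-orthonormality \eqref{equ:kronecker} of $\{\Phi_\ell\}_{\ell=1}^{d}$ together with the quadrature exactness of degree $2L$; for the error estimate \eqref{equ:error}, I would combine stability with the projection-like property $\mathcal{L}_L p = p$ for every $p\in\mathbb{P}_L(\Omega)$ (which follows at once from \eqref{equ:identity}).

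For stability, I would start from the explicit form \eqref{equ:hyper} of $\mathcal{L}_L f$ and expand $\|\mathcal{L}_L f\|_2^2$ using \eqref{equ:kronecker}, obtaining $\|\mathcal{L}_L f\|_2^2 = \sum_{\ell=1}^{d}\langle f,\Phi_\ell\rangle_N^2$. Recognising the right-hand side as the discrete inner product $\langle f,\mathcal{L}_L f\rangle_N$ and applying the discrete Cauchy--Schwarz inequality gives $\|\mathcal{L}_L f\|_2^2 \leq \|f\|_\infty \bigl(\sum_j w_j\bigr)^{1/2}\bigl(\sum_j w_j(\mathcal{L}_L f(\mathbf{x}_j))^2\bigr)^{1/2}$. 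Exactness of the quadrature on constants yields $\sum_j w_j = V$, while exactness on $\mathbb{P}_{2L}(\Omega)$ (applied to $(\mathcal{L}_L f)^2$) yields $\sum_j w_j(\mathcal{L}_L f(\mathbf{x}_j))^2 = \|\mathcal{L}_L f\|_2^2$. Dividing both sides by $\|\mathcal{L}_L f\|_2$ delivers \eqref{equ:stability}.

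For the error bound, I would fix an arbitrary $p\in\mathbb{P}_L(\Omega)$ and write $\mathcal{L}_L f - f = \mathcal{L}_L(f-p) - (f-p)$, using linearity of $\mathcal{L}_L$ and the identity $\mathcal{L}_L p = p$. The triangle inequality then gives $\|\mathcal{L}_L f - f\|_2 \leq \|\mathcal{L}_L(f-p)\|_2 + \|f-p\|_2$. By \eqref{equ:stability} the first term is at most $V^{1/2}\|f-p\|_\infty$, while the crude pointwise bound $\|g\|_2 \leq V^{1/2}\|g\|_\infty$ (from $\int_\Omega d\omega = V$) handles the second. Taking the infimum over $p\in\mathbb{P}_L(\Omega)$ yields \eqref{equ:error}, and the convergence $\|\mathcal{L}_L f - f\|_2 \to 0$ follows from $E_L(f)\to 0$ as $L\to\infty$, a consequence of the Weierstrass density of polynomials in $\mathcal{C}(\Omega)$ for the compact domains $\Omega$ considered here.

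There is no genuine obstacle; the proof is essentially an accounting exercise in which degree of exactness is used at each step. The only subtlety worth flagging is keeping straight the three distinct appearances of the quadrature rule: (i) degree $0$ to get $\sum_j w_j = V$; (ii) degree $2L$ to identify $\sum_j w_j(\mathcal{L}_L f(\mathbf{x}_j))^2$ with $\|\mathcal{L}_L f\|_2^2$ via \eqref{equ:kroneckerdiscrete}; and (iii) once more to ensure $\mathcal{L}_L$ reproduces every element of $\mathbb{P}_L(\Omega)$. Once these are in place, both bounds drop out immediately.
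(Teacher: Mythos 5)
Your proof is correct. The paper does not reprove this lemma---it quotes it as Theorem~1 of Sloan (1995)---and your argument is essentially the classical one: the only cosmetic difference is that Sloan's stability step bounds $\langle \mathcal{L}_L f,\mathcal{L}_L f\rangle_N \le \langle f,f\rangle_N \le V\|f\|_\infty^2$ via the discrete Pythagoras/projection identity, whereas you reach the same constant by applying Cauchy--Schwarz to $\langle f,\mathcal{L}_L f\rangle_N$ (valid precisely because the quadrature weights are positive), after which your error bound via $\mathcal{L}_L p = p$ and the triangle inequality is word-for-word the standard argument.
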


Under assumptions of Theorem \ref{thm:HybridHyperSol}, let $\alpha_{\ell} = \langle f,\Phi_{\ell} \rangle_N= \sum_{j=1}^{N}w_{j} f(\mathbf{x}_j) \Phi_{\ell}(\mathbf{x}_j)$ and introduce the operator
\begin{eqnarray}\label{ope:Kf}
    K(f):= \sum_{\ell=1}^{d} \{ h_{\ell}\softoperatornew{\alpha_{\ell}}\cdot \alpha_{\ell} - \squarebrackets{h_{\ell}\softoperatornew{\alpha_{\ell}}}^2   \}  \geq 0.
\end{eqnarray}
Note that the positiveness of $K(f)$ is a direct consequence of the fact that
\begin{eqnarray}\label{Kf}
    \begin{cases}
        \alpha_{\ell} < h_{\ell}\softoperatornew{\alpha_{\ell}} \leq 0, \quad {\mbox{if }} \alpha_{\ell} <0, \\
0 \leq h_{\ell} \softoperatornew{\alpha_{\ell}} \leq \alpha_{\ell}, \quad {\mbox{if }} \alpha_{\ell} \geq 0,
    \end{cases}
\end{eqnarray}
which easily implies that each argument of the sum on the right-hand side of \eqref{Kf} is non-negative.  With the help of $K(f)$, the following lemma holds:
%If $\lambda$ is sufficiently large, that is,
%\begin{eqnarray*}
%    \lambda \geq \max_{\ell=1,\ldots,d} \frac{|\alpha_{\ell}| }{\mu_{\ell}},
%\end{eqnarray*}
%then $K(f)=0$. With the help of $K(f)$, the following lemma holds:

\begin{lemma}\label{lem:geomhybrid}
Under the conditions of Theorem \ref{thm:HybridHyperSol},
\begin{enumerate}
\item[(a)] $\discreteinner{f- \hybridhyper{f},\hybridhyper{f}}= K(f)$;
\item[(b)] $\discreteinner{\hybridhyper{f},\hybridhyper{f}} + \discreteinner{f- \hybridhyper{f},f- \hybridhyper{f}} = \discreteinner{f,f}-2K(f)$;
\item[(c)] $K(f) \leq \discreteinner{f,f}/2$;
\item[(d)] $\discreteinner{\hybridhyper{f},\hybridhyper{f}} \leq \discreteinner{f,f} - 2K(f)$.
\end{enumerate}
\end{lemma}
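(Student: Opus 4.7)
The plan is to reduce everything to two ingredients: the explicit coefficient formula $\mathcal{H}_{L}^{\lambda}f = \sum_{\ell=1}^{d} h_{\ell}\,\mathcal{S}_{\lambda\mu_{\ell}}(\alpha_{\ell})\,\Phi_{\ell}$ with $\alpha_{\ell}=\langle f,\Phi_{\ell}\rangle_{N}$, and the discrete orthonormality $\langle \Phi_{\ell},\Phi_{\ell'}\rangle_{N}=\delta_{\ell\ell'}$ already established in equation \eqref{equ:identity}. Parts (b), (c), (d) are then bookkeeping consequences of (a).

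For part (a), I would expand bilinearly
\[
\langle f-\mathcal{H}_{L}^{\lambda}f,\mathcal{H}_{L}^{\lambda}f\rangle_{N} = \langle f,\mathcal{H}_{L}^{\lambda}f\rangle_{N} - \langle \mathcal{H}_{L}^{\lambda}f,\mathcal{H}_{L}^{\lambda}f\rangle_{N}.
\]
The first term becomes $\sum_{\ell} h_{\ell}\mathcal{S}_{\lambda\mu_{\ell}}(\alpha_{\ell})\,\langle f,\Phi_{\ell}\rangle_{N} = \sum_{\ell} h_{\ell}\mathcal{S}_{\lambda\mu_{\ell}}(\alpha_{\ell})\,\alpha_{\ell}$. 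The second term collapses via $\langle \Phi_{\ell},\Phi_{\ell'}\rangle_{N}=\delta_{\ell\ell'}$ to $\sum_{\ell}[h_{\ell}\mathcal{S}_{\lambda\mu_{\ell}}(\alpha_{\ell})]^{2}$. Subtracting yields exactly $K(f)$ as defined in \eqref{ope:Kf}.

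For part (b), I would simply polarize: expand
\[
\langle f-\mathcal{H}_{L}^{\lambda}f,f-\mathcal{H}_{L}^{\lambda}f\rangle_{N} = \langle f,f\rangle_{N} - 2\langle f,\mathcal{H}_{L}^{\lambda}f\rangle_{N} + \langle \mathcal{H}_{L}^{\lambda}f,\mathcal{H}_{L}^{\lambda}f\rangle_{N},
\]
add $\langle \mathcal{H}_{L}^{\lambda}f,\mathcal{H}_{L}^{\lambda}f\rangle_{N}$ to both sides, regroup the right-hand side as $\langle f,f\rangle_{N}-2\langle f-\mathcal{H}_{L}^{\lambda}f,\mathcal{H}_{L}^{\lambda}f\rangle_{N}$, and invoke (a). For (c), I use that the discrete inner product is positive semidefinite (positive weights), so both summands on the left-hand side of (b) are nonnegative; in particular $\langle f-\mathcal{H}_{L}^{\lambda}f,f-\mathcal{H}_{L}^{\lambda}f\rangle_{N}\geq 0$ together with $\langle \mathcal{H}_{L}^{\lambda}f,\mathcal{H}_{L}^{\lambda}f\rangle_{N}\geq 0$ forces $\langle f,f\rangle_{N}-2K(f)\geq 0$. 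Finally (d) is the same identity (b) rearranged, dropping the nonnegative residual term $\langle f-\mathcal{H}_{L}^{\lambda}f,f-\mathcal{H}_{L}^{\lambda}f\rangle_{N}$.

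There is no genuine obstacle; the only point worth flagging is that the auxiliary inequalities in \eqref{Kf} guarantee that $h_{\ell}\mathcal{S}_{\lambda\mu_{\ell}}(\alpha_{\ell})\cdot\alpha_{\ell}\geq [h_{\ell}\mathcal{S}_{\lambda\mu_{\ell}}(\alpha_{\ell})]^{2}$ term by term, so $K(f)$ is actually a sum of nonnegative entries (this is already observed in the statement, and it is what makes the manipulations above consistent with $K(f)\geq 0$). Everything else is a direct application of the discrete orthonormality relation and bilinearity of $\langle\cdot,\cdot\rangle_{N}$.
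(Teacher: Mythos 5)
Your proposal is correct and follows essentially the same route as the paper: part (a) via bilinearity and the discrete orthonormality $\langle\Phi_{\ell},\Phi_{\ell'}\rangle_N=\delta_{\ell\ell'}$, then (b) by polarization and substitution of (a), with (c) and (d) obtained by dropping the respective nonnegative terms. The only cosmetic difference is that in (a) you split $\langle f-\mathcal{H}_L^{\lambda}f,\mathcal{H}_L^{\lambda}f\rangle_N$ into two inner products before applying orthonormality, whereas the paper first factors out the coefficients $h_k\mathcal{S}_{\lambda\mu_k}(\alpha_k)$ and computes $\langle f-\mathcal{H}_L^{\lambda}f,\Phi_k\rangle_N=\alpha_k-h_k\mathcal{S}_{\lambda\mu_k}(\alpha_k)$; the computations are identical in substance.
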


\begin{proof}
(a) We obtain
\begin{equation*}
\begin{aligned}
&\discreteinner{f- \hybridhyper{f},\hybridhyper{f}}\\
=& \discreteinner{f- \summ{\ell}{d} h_{\ell}\softoperatornew{\alpha_{\ell}}\Phi_{\ell}  ,  \summ{k}{d} h_{k}\mathscr{S}_{\lambda \mu_k}(\alpha_{k})\Phi_{k}}  ,\\
=& \summ{k}{d}   h_k\mathscr{S}_{\lambda \mu_k}(\alpha_{k}) \discreteinner{ f- \summ{\ell}{d} h_{\ell}\softoperatornew{\alpha_{\ell}}\Phi_{\ell}, \Phi_k   },
\end{aligned}
\end{equation*}
and
\begin{equation*}
\begin{aligned}
 &\discreteinner{ f- \summ{\ell}{d} h_{\ell}\softoperatornew{\alpha_{\ell}}\Phi_{\ell}, \Phi_k   }\\
 =& \discreteinner{f,\Phi_k}-\discreteinner{\summ{\ell}{d}h_{\ell}\softoperatornew{\alpha_{\ell}}\Phi_{\ell},\Phi_k} ,\\
=& \alpha_k- h_k\mathscr{S}_{\lambda \mu_k}(\alpha_{k})  .
\end{aligned}
\end{equation*}
Hence, we have
\begin{equation*}
\begin{aligned}
&\summ{k}{d} h_k\mathscr{S}_{\lambda \mu_k}(\alpha_{k}) \discreteinner{ f- \summ{\ell}{d} h_{\ell}\softoperatornew{\alpha_{\ell}}\Phi_{\ell}, \Phi_k   } \\
=&\summ{k}{d} h_{k}\mathscr{S}_{\lambda \mu_k}(\alpha_{k}) \cdot \squarebrackets{\alpha_k- h_{k}\mathscr{S}_{\lambda \mu_k}(\alpha_{k})},\\
=&\summ{k}{d} \braces{h_{k}\mathscr{S}_{\lambda \mu_k}(\alpha_{k})\alpha_{k} - \squarebrackets{h_k\mathscr{S}_{\lambda \mu_k}(\alpha_{k})}^2},\\
=&K(f).
\end{aligned}
\end{equation*}

(b) From $\discreteinner{\hybridhyper{f},\hybridhyper{f}}= \discreteinner{f,\hybridhyper{f}}- K(f)$ and
\begin{eqnarray*}
    \discreteinner{f- \hybridhyper{f},f- \hybridhyper{f}} = \discreteinner{f,f} - 2 \discreteinner{f,\hybridhyper{f}} +   \discreteinner{\hybridhyper{f},\hybridhyper{f}},
\end{eqnarray*}
we immediately have
\begin{eqnarray}\label{equ:geomhybrid}
    \discreteinner{\hybridhyper{f},\hybridhyper{f}} + \discreteinner{f- \hybridhyper{f},f- \hybridhyper{f}} = \discreteinner{f,f}-2K(f).
\end{eqnarray}

(c) From equality \eqref{equ:geomhybrid} and the non-negativeness of $ \discreteinner{g,g}$ for any $g \in \mathcal{C}(\Omega)$, we obtain $\discreteinner{f,f}-2K(f) \geq 0$, which means that $ K(f) \leq \discreteinner{f,f}/2 $.

(d) From equality \eqref{equ:geomhybrid} and the non-negativeness of $ \discreteinner{f- \hybridhyper{f},f- \hybridhyper{f}} $, we immediately have $\discreteinner{\hybridhyper{f},\hybridhyper{f}} \leq \discreteinner{f,f} - 2K(f)$.
\end{proof}

%Before we proceed on proving results of error analysis of hybrid hyperinterpolation, we first give a useful lemma for filtered hyperinterpolation (in the case of hybrid hyperinterpolation $\lambda=0$).

\begin{remark}
    Note that when $\lambda=0$ in hybrid hyperinterpolation $\mathcal{H}^{\lambda}_Lf$, we obtain filtered hyperinterpolation $\mathcal{F}_{L,N}f$. And the above results also hold for filtered hyperinterpolation with $\mathcal{H}_{L}^{\lambda}f$ being replaced by $\mathcal{F}_{L,N}f$, in which the soft thresholding operator becomes the identity operator in $K(f)$ in  \eqref{ope:Kf}.
\end{remark}

Next, we examine the $L_2$ operator norm of the hybrid hyperinterpolation operator and provide a theoretical $L_2$ error bound estimate.

\begin{theorem}\label{thm:HybridHyperLeastApproxNoise}
Under the conditions of Theorem \ref{thm:HybridHyperSol}. Further, assume $f^{\epsilon} = f+ \epsilon$ is a noisy version of $f$, where $\epsilon$ is some noise. Then there exists $\tau_1 <1$, which relies on $f$ such that
\begin{eqnarray}\label{norm:hybrid}
\normtwo{\mathcal{H}_{L}^{\lambda}{f}} \leq \tau_1 V^{1/2} \norminfty{f},
\end{eqnarray}
and
\begin{eqnarray}\label{norm:L2ErrHybridNoise}
\normtwo{\hybridhyper{f^{\epsilon}} - f }  \leq  \| \mathcal{H}_{L}^{\lambda} f^{\epsilon} - \mathcal{F}_{L,N}f^{\epsilon} \|_{2} + 2V^{1/2}E_{\lfloor L/2\rfloor}(f) +    V^{1/2} \| f^{\epsilon} -f  \|_{\infty}.
\end{eqnarray}
Thus,
\begin{eqnarray*}
    \lim_{L \to \infty} \lim_{\lambda \to 0} \normtwo{\hybridhyper{f^{\epsilon}} - f } = V^{1/2}\norminfty{f-f^{\epsilon}}.
\end{eqnarray*}
\end{theorem}

\begin{proof}
For any $f \in \mathcal{C}(\Omega)$, we get $\hybridhyper{f} \in \mathbb{P}_L(\Omega)$. By Lemma \ref{lem:geomhybrid} (d), we have
\begin{equation*}
\begin{aligned}
    \normtwo{\hybridhyper{f}}^2&=\inner{\hybridhyper{f},\hybridhyper{f}} = \discreteinner{\hybridhyper{f},\hybridhyper{f}} \leq \discreteinner{f,f} - 2K(f), \\
&= \summ{j}{N}w_j(f(\mathbf{x}_j))^2 - 2K(f) \leq \summ{j}{N}w_j\norminfty{f}^2 - 2K(f), \\
&= V\norminfty{f}^2 - 2K(f).
\end{aligned}
\end{equation*}
Since $ \normtwo{\hybridhyper{f}} \leq \sqrt{ V\norminfty{f}^2 - 2K(f)  }$ and $K(f) >0$, there exist $\tau_1=\tau_1(f) <1$ such that
\begin{eqnarray*}
    \normtwo{\hybridhyper{f}} \leq \sqrt{ V\norminfty{f}^2 - 2K(f)  } = \tau_1 V^{1/2}\norminfty{f}.
\end{eqnarray*}
If $\lambda=0$, then
\begin{equation}\label{equ:bound_filter}
    \|\mathcal{F}_{L,N}f \|_{2} < V^{1/2} \|f\|_{\infty}.
\end{equation}
Then we obtain
\begin{eqnarray*}
     \normtwo{\hybridhyper{f^{\epsilon}} - f } \leq \| \mathcal{H}_L^{\lambda} f^{\epsilon} - \mathcal{F}_{L,N} f^{\epsilon} \|_2 + \| \mathcal{F}_{L,N} f^{\epsilon} -f \|_2 .
\end{eqnarray*}
Next, by \eqref{equ:bound_filter} and $\mathcal{F}_{L,N}q=q$ for all $q\in \mathbb{P}_{\lfloor L/2 \rfloor}(\Omega)$, we have
\begin{eqnarray*}
    \begin{aligned}
        \| \mathcal{F}_{L,N}f^{\epsilon} -f \|_{2} &\leq \| \mathcal{F}_{L,N}(f^{\epsilon} -q^{\ast}) + \mathcal{F}_{L,N}q^{\ast}-f\|_{2}, \\
        & \leq \| \mathcal{F}_{L,N}(f^{\epsilon} -q^{\ast})\|_2 + \|f-q^{\ast}\|_2, \\
        & \leq V^{1/2}\|f^{\epsilon}-q^{\ast}\|_{\infty} +  V^{1/2}\|f-q^{\ast}\|_{\infty}, \\
        & \leq V^{1/2}\|f^{\epsilon}-f\|_{\infty} +  2V^{1/2}E_{\lfloor L/2 \rfloor}(f),
    \end{aligned}
\end{eqnarray*}
where $q^{\ast} \in \mathbb{P}_{\lfloor L/2 \rfloor}(\Omega)$ is the best approximation of $f$ in $\mathbb{P}_{\lfloor L/2 \rfloor}(\Omega)$, the second term on the right side in the third row follows from the Cauchy-Schwarz inequality, which ensures 
\begin{equation*}\|g\|_2=\sqrt{\langle g, g \rangle} \leq \| g\|_{\infty}\sqrt{\langle 1, 1 \rangle}= V^{1/2}\|g\|_{\infty}, \quad\forall g \in \mathcal{C}(\Omega).
\end{equation*} It is obvious that
\begin{eqnarray*}
    \lim_{\lambda \to 0}  \| \mathcal{H}_L^{\lambda} f^{\epsilon} - \mathcal{F}_{L,N} f^{\epsilon} \|_2 =0 \quad \text{and} \quad \lim_{L \to \infty} E_{\lfloor L/2 \rfloor}(f) =0.
\end{eqnarray*}

Thus, we obtain
\begin{eqnarray*}
    \lim_{L \to \infty} \lim_{\lambda \to 0} \normtwo{\hybridhyper{f^{\epsilon}} - f } = V^{1/2}\norminfty{f-f^{\epsilon}}.
\end{eqnarray*}
\end{proof}

\begin{remark}
Although $\lim_{L \to \infty} \normtwo{\mathcal{L}_Lf^{\epsilon}  - f }=  V^{1/2} \| f^{\epsilon} -f  \|_{\infty}$,  we consider hybrid hyperinterpolation as an effective tool to recover the objective function with noise since it possesses the basis selection ability and doubly shrinks less relevant coefficients.
\end{remark}

%%%%%%%%%%%%%%%%%%%%%%%%%%%%%%%%%%%%%%%%%%%%%%%%%%%%%%%%%%%%%%%%%%%%%%%%%%%%%%%%%%%%%%%%%%%%%%%%%%%%%%%%%%%%%%%%%%%%%%%%%%%%%%%%%%%%%%%%%%

%%%%%%%%%%%%%%%%%%%%%%%%%%%%%%%%%%%%%%%%%%%%%%%%%%%%%%%%%%%%%%%%%%%%%%%%%%%%%%%%%%%%%%%%%%%%%%%%%%%%%%%%%%%%%%%%%%%%%%%%%%%%%%%%%%%%%%%%%%
\section{Numerical experiments}

In this section, we will take as $\Omega$, the unit-sphere $\mathbb{S}^2 \subset \mathbb{R}^3$ and the case of a bivariate domain defined as the union of disks. The main interest for the latter is that, differently the previous example, an orthonormal basis is not theoretically available and must be computed numerically.

In the two regions mentioned above, we have examined {\em{Filtered}}, {\em{Lasso}}, {\em{Hybrid}} as well as the classical {\em{Hyperinterpolation}} to make a contrast. Besides, all the penalty parameters $\{\mu_{\ell}\}_{\ell=1}^d$  in  Lasso and hybrid hyperinterpolations are set to be 1.

In the case of filtered hyperinterpolation we have used as $h: [0,1] \rightarrow [0,1]$ the function
\begin{eqnarray*}
 h(x)=   \begin{cases}
1, &x \in [0, \frac{1}{2}], \\
\sin^2 (\pi \, x), &x \in [\frac{1}{2},1]. 
    \end{cases}
\end{eqnarray*}

Depending on the numerical experiments, we have added noise to the evaluation of a function $f$ on the $N$ nodes $\{ {\mathbf{x}_j}\}_{j=1}^{N}$. In particular we considered
\begin{itemize}
\item {\em{Gaussian noise}} ${\cal{N}}(0,\sigma^2)$ from a normal distribution with mean 0
        and standard deviation {\tt{sigma}}=$\sigma$, implemented via the Matlab command
\begin{center}
{\tt{sigma*randn(N,1)}};
\end{center}
\item {\em{Impulse noise}} ${\cal{I}}(a)$ that takes a uniformly distributed random values in $[-a,a]$ with probability $1/2$ by means of the Matlab command
\begin{center}
{\tt{a*(1-2*rand(N,1))*binornd(1,0.5)}}.
\end{center}
since {\tt{binornd(1,0.5)}} returns an array of random numbers chosen from a binomial distribution with parameters $1$ and $1/2$.
\end{itemize}

{\vspace{0.1cm}}

All Matlab codes used in these experiments are available at a GITHUB homepage \url{https://github.com/alvisesommariva/HYPER24}. All tests were performed by Matlab  R2022a, Update 4.

%%%%%%%%%%%%%%%%%%%%%%%%%%%%%%%%%%%%%%%%%%%%%%%%%%%%%%%%%%%%%%%%%%%%%%%%%%%%%%%%%%%%%%%%%%%%%%%%%%%%%%%%%%%%%%%%%%%%

%%%%%%%%%%%%%%%%%%%%%%%%%%%%%%%%%%%%%%%%%%%%%%%%%%%%%%%%%%%%%%%%%%%%%%%%%%%%%%%%%%%%%%%%%%%%%%%%%%%%%%%%%%%%%%%%%%%%

\subsection{Union of disks}\label{UnionDisk}

In this section we suppose $\Omega$ is the union of $M$ disks in general with different centers $C_k$ and radii $r_k$, i.e. $\Omega=\cup_{k=1}^{M} B(C_k,r_k) \subset {\mathbb{R}}^2$. In {\cite{sommoriva2022cubature}}, it was introduced an algorithm that determine low-cardinality cubature rules of PI-type such that
\begin{eqnarray}\label{UOD}
    \int_{\Omega} p({\mathbf{x}}) \, \text{d}x \text{d}y = \sum_{k=1}^{N_n} w_k p({\mathbf{x}}_k), \quad p\in {\mathbb{P}}_n(\Omega),
\end{eqnarray}
where $n$ is the algebraic degree of exactness fixed by the user. These rules have the property that $N_n \leq d={\mbox{dim}}({\mathbb{P}}_n)$.

Thus, having in mind to provide hyperinterpolants of degree $L$ we will adopt a rule with degree $2L$ and introduce the scalar product
\begin{eqnarray}\label{UOD2}
\langle f,g \rangle=\langle f,g \rangle_N=\sum_{k=1}^{N} w_k f({\mathbf{x}}_k)g({\mathbf{x}}_k), \quad f,g \in {\mathbb{P}}_{L}(\Omega),
\end{eqnarray}
where $N=N_{2L}$.

In particular, the polynomial orthonormal basis of ${\mathbb{P}}_{L}(\Omega)$ in general is not known explicitly and must be computed numerically, following the arguments exposed in \cite{sommariva2017nonstandard}. In detail, let $\{\varphi_k\}_{k=1}^{d}$ be a triangular polynomial basis of ${\mathbb{P}}_n$ and $\mathbf{V}_{\varphi}=(\varphi_j({\mathbf{x}}_i)) \in {\mathbb{R}}^{N \times d}$ be the Vandermonde matrix at the cubature nodes of a formula (\ref{UOD}) of PI-type with algebraic degree of exactness $2L$, as in {\cite{sommoriva2022cubature}}. Let ${\sqrt{\mathbf{W}}} \in {\mathbb{R}}^{N \times N}$be the diagonal matrix whose entries are ${\sqrt{\mathbf{W}}}_{i,i}=\sqrt{w_i}$, $i=1,\ldots,N$. Under these assumptions it can be shown that $\mathbf{V}_{\varphi}$ has full-rank and we can apply $QR$ factorization ${\sqrt{\mathbf{W}}}\mathbf{V}_{\varphi}=\mathbf{QR}$ with $\mathbf{Q} \in {\mathbb{R}}^{N \times d}$ an orthogonal matrix and $\mathbf{R} \in {\mathbb{R}}^{d \times d}$ an upper triangular non-singular matrix. This easily entails that the polynomial basis $(\Phi_1,\ldots,\Phi_d)=(\varphi_1,\ldots,\varphi_d)\mathbf{R}^{-1}$ is orthonormal with respect to the scalar product $\langle f,g \rangle_N$ and consequently by (\ref{UOD2}) to that of $L_2(\Omega)$.
A fundamental aspect is that since $\mathbf{R}^{-1}$ is an upper triangular non-singular matrix, then the basis $(\Phi_1,\ldots,\Phi_d)$ is also triangular and ${\mbox{deg}} (\Phi_k)={\mbox{deg}} (\varphi_k)$, $k=1,\ldots,d$. This result is fundamental to apply filtered hyperinterpolation over $\Omega=\cup_{k=1}^{M} B(C_k,r_k)$.
In our numerical experiments, as triangular basis $\{\varphi_k\}_{k=1}^{d}$, $d=(L + 1)(L + 2)/2$, of ${\mathbb{P}}_L(\Omega)$ we considered the total-degree product Chebyshev basis of the smallest Cartesian rectangle $[a_1,b_1] \times [a_2,b_2]$ containing $\Omega$, with the algebraic degree of exactness lexicographical ordering.

As for the domain, we set $\Omega=\Omega^{(r_1)} \cup \Omega^{(r_2)}$, where $\Omega^{(r_j)}$, $j=1,2$, is the union of $19$ disks with centers $P_k^{(r_j)}=(r_j \cos(\theta_k),r_j \sin(\theta_k))$, where $\theta_k=2k\pi/19$, $k=0,\ldots,18$ and radius equal to $r_j/4$, with $r_1=2$
and $r_2=4$; notice that the set is the disconnected union of two multiply connected unions (see Figure \ref{fig_e1}).

\begin{figure}[!htbp]
  \centering
   %{\includegraphics[scale=0.15,clip]{}}
   {\includegraphics[scale=0.15,clip]{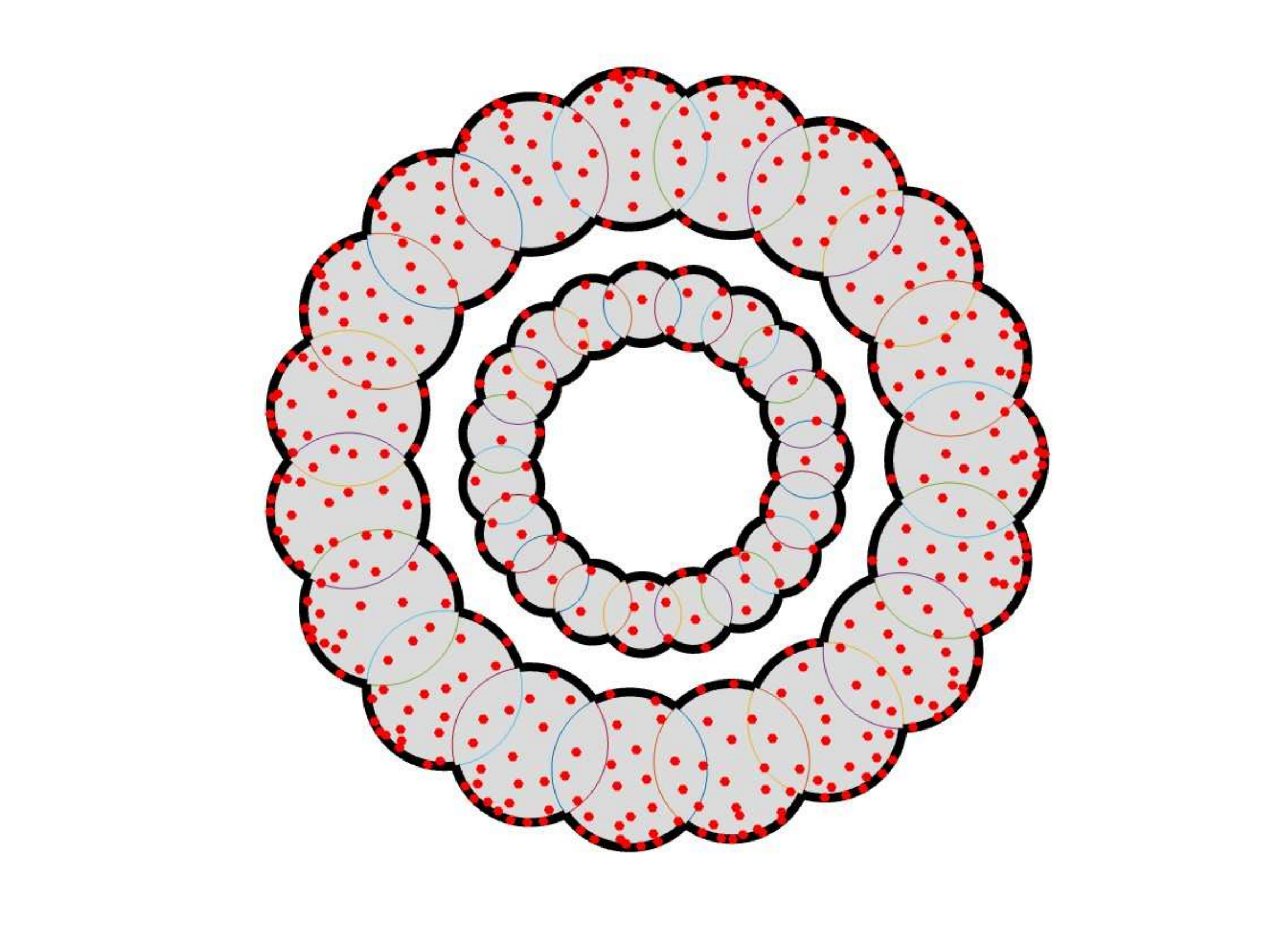}}
 \caption{The domain $\Omega=\Omega^{(r_1)} \cup \Omega^{(r_2)}$ in which we perform our tests. We represent in red the $N=496$ nodes of the cubature rule for algebraic degree of exactness=$30$.}
 \label{fig_e1}
\end{figure}

In order to define all the hyperinterpolants at $L=15$ the required formula must have algebraic degree of exactness $2L$ and consists of $N=496$ nodes that is equal to the dimension of the polynomial space ${\mathbb{P}}_{30}(\Omega)$.

Next, we consider contaminated evaluations of
\begin{eqnarray*}
    f({\mathbf{x}})=(1-(x^2+y^2)) \exp ( x \, \cos(y)),
\end{eqnarray*}
where ${\mathbf{x}}=(x,y)^{\rm{T}}$, at the nodes $\{{\mathbf{x}}_j\}_{j=1}^{N}$,  contaminated by Gaussian noise ($\sigma=0.05$).

In Figure \ref{figure:unionofdisks}, we test the denoising abilities of hyperinterpolation $\mathcal{L}_L f^{\epsilon}$, filtered hyperinterpolation $\mathcal{F}_{L,N} f^{\epsilon}$, and hybrid hyperinterpolation $\mathcal{H}_{L}^{\lambda} f^{\epsilon}$ with the given regularization parameter $\lambda$ to reach its smallest $L_{\infty}$ error. As illustrated in Figure \ref{figure:unionofdisks2} for hybrid hyperinterpolation, the smallest $L_2$ error of 0.016905 is obtained for $\lambda=0.004809$, while the smallest $L_{\infty}$ error of 0.039246 is obtained for $\lambda=0.006801$. From Table \ref{union_disk_table}, it is clear that hybrid hyperinterpolation makes a compromise between the sparsity and smallest approximation errors in $L_2$ and $L_{\infty}$ norms. Although hybrid hyperinterpolation is less sparse than Lasso hyperinterpolation, it achieves a more robust denoising effect.

%In Table \ref{union_disk_table}, we show that hybrid hyperinterpolation $\mathcal{H}_L^{\lambda} f^{\epsilon}$ performs better than hyperinterpolation $\mathcal{L}_L f^{\epsilon}$, filtered hyperinterpolation $\mathcal{F}_{L,N}f^{\epsilon}$, and Lasso hyperinterpolation $\mathcal{L}_L^{\lambda} f^{\epsilon}$ in denoising. 

Numerical tests show the favourable ratio between the sparsity and $L_2$ errors of the hybrid hyperinterpolants.

\begin{table}
\begin{center}
  \begin{tabular}{|c||c| c  |c||c| c  |c|}
    \hline
    \multirow{2}{*}{} &
      \multicolumn{1}{c |}{$\lambda$ } &
      \multicolumn{1}{c |}{smallest $L_2$ error} &
      \multicolumn{1}{c ||}{sparsity } &
      \multicolumn{1}{c |}{$\lambda$ } &
      \multicolumn{1}{c |}{smallest $L_{\infty}$ error}&
      \multicolumn{1}{c |}{sparsity } \\
    \hline

{\em{Hyperint.}} & $ - $ & $ 0.045506 $ & $ 136 $ & $ - $ & $0.118451$ & $136$ \\

{\em{Filtered}} & $ - $ & $ 0.031313 $ & $ 120 $ & $ - $ & $0.104317$ & $120$ \\

{\em{Lasso}} & $ 0.005921 $ & $ 0.019482 $ & $ 30 $ & $ 0.007289 $ & $0.044901$ & $16$ \\

{\em{Hybrid}} & $ 0.004809 $ & $ 0.016905 $ & $ 37 $ & $ 0.006801 $ & $0.039246$ & $21$ \\

    \hline
  \end{tabular}
  \caption{Smallest approximation errors in the $L_2$ and $L_{\infty}$ norms and the sparsity of variants of hyperinterpolation coefficients of noisy versions of $f(x,y)=(1-(x^2+y^2)) \exp ( x \, \cos(y))$  over the domain  union of disks, contaminated with Gaussian noise ($\sigma =0.05$). The hyperinterpolation degree is $15$ and we list the $\lambda$ that provide the best results for these norms for Lasso and Hybrid hyperinterpolation are indicated in the second and fifth column.}
  \label{union_disk_table}
  \end{center}
\end{table}

\begin{figure}[htbp]
    \centering
    \includegraphics[width=\textwidth]{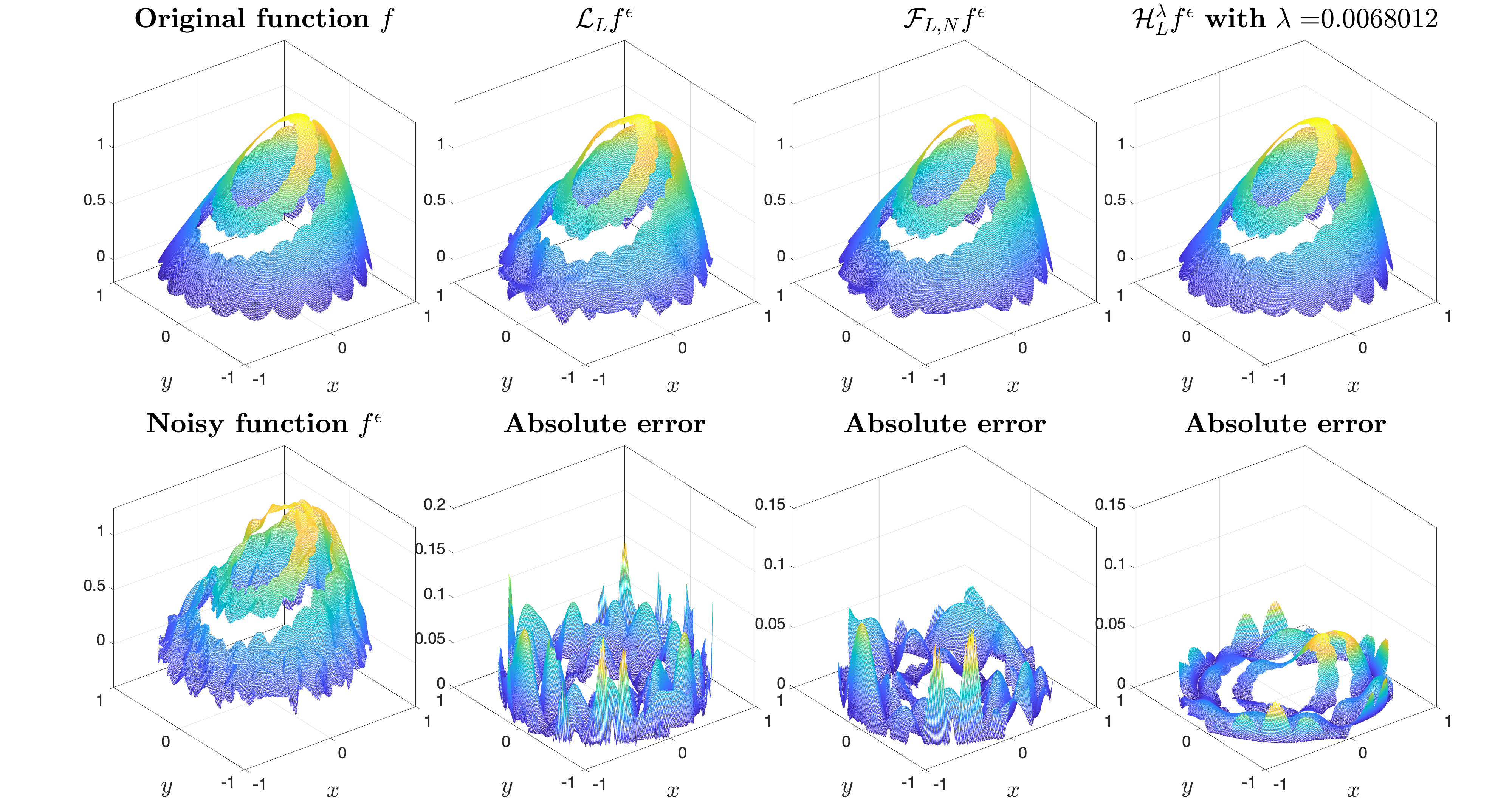}
    \caption{Approximate  $f(x,y)=(1-(x^2+y^2)) \exp ( x \, \cos(y))$, perturbed by Gaussian noise ($\sigma=0.05$), over the union of disks, via
    hyperinterpolation $\mathcal{L}_L f^{\epsilon}$,
    filtered hyperinterpolation $\mathcal{F}_{L,N}f^{\epsilon}$, and
    hybrid hyperinterpolation $\mathcal{H}_{L}^{\lambda} f^{\epsilon}$ with $L=15$.}
    \label{figure:unionofdisks}
\end{figure}

\begin{figure}[htbp]
    \centering
    \includegraphics[width=\textwidth]{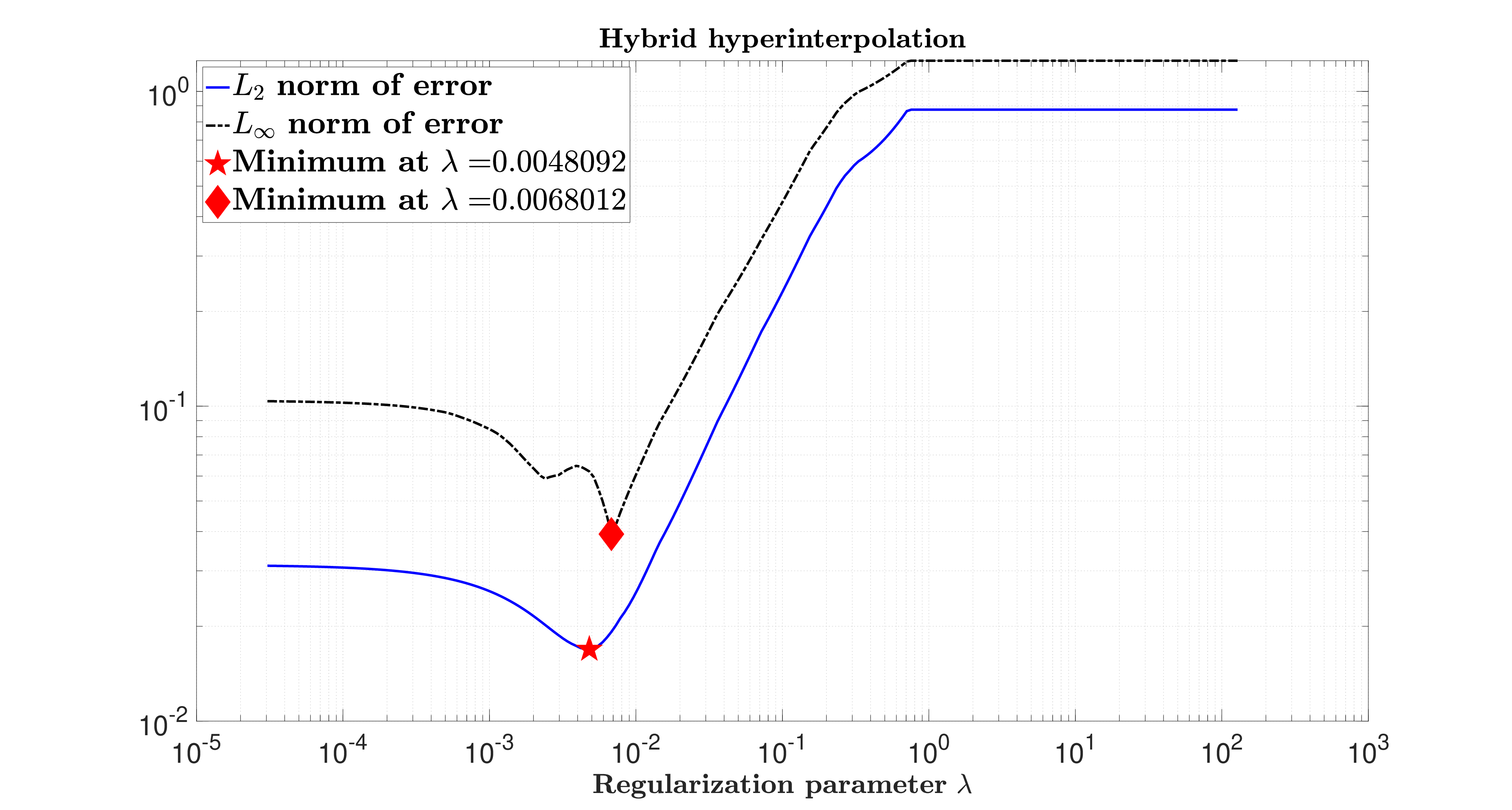}
    \caption{The choices of regularization parameter $\lambda$   for hybrid hyperinterpolation $\mathcal{H}_L^{\lambda} f^{\epsilon}$ with $L=15$ approximating
    $f(x,y)=(1-(x^2+y^2)) \exp ( x \, \cos(y))$, perturbed by Gaussian noise ($\sigma=0.05$), over the union of disks.} 
    \label{figure:unionofdisks2}
\end{figure}

\subsection{The unit-sphere \texorpdfstring{${\mathbb{S}}^2$}{}} 

As the second domain we set as $\Omega$ the unit-sphere, i.e. ${\mathbb{S}}^2=\{{\mathbf{x}}: \|{\mathbf{x}}\|_2=1\}$ (see e.g. {\cite{hesse2010numerical}}). Between the most studied families of quadrature rules on ${\mathbb{S}}^2$ with respect to the surface measure $\measure$  (see e.g. {\cite{hesse2010numerical}}), there are the so called {\em{spherical $t$-designs}}, introduced by Delsarte, Goethals and Seidel in 1977 (see, e.g. the pioneering work {\cite{delsarte1977spherical}} and for recent advances {\cite{womersley2018efficient}} and references therein). A pointset $\{{\mathbf{x}}_1,\ldots,{\mathbf{x}}_N\} \subset {\mathbb{S}}^2$ is a spherical $t$-design if it satisfies
\begin{eqnarray*}
    {\frac{1}{N}} \sum_{j=1}^N p({\mathbf{x}}_j) = \int_{{\mathbb{S}}^2} p({\mathbf{x}}) \text{d} {\omega}({\mathbf{x}}), {\mbox{  }} \forall p \in {\mathbb{P}}_t({\mathbb{S}}^2).
\end{eqnarray*}
In this work we shall consider in particular an {\em{efficient spherical design}} ${\cal{X}}_N$, between those proposed in {\cite{womersley2018efficient}}. In our tests, since we intend to compute several type of hyperinterpolants of total degree at most $L=15$, it is necessary to adopt a rule with algebraic degree of exactness equal to $2L=30$, consisting of $N=482$ points.
We thus have that for any $f,g \in {\mathbb{P}}_L({\mathbb{S}}^2)$
\begin{eqnarray*}
    \int_{{\mathbb{S}}^2} f({\mathbf{x}})g({\mathbf{x}}) \text{d} {\omega}({\mathbf{x}})=  \langle f,g \rangle=\langle f,g \rangle_{961}:=\sum_{j=1}^{961} w_j f({\mathbf{x}}_j) g({\mathbf{x}}_j).
\end{eqnarray*}

We adopt the so called {\em{spherical harmonics}} {\cite{atkinson2012spherical}} as the triangular and orthonormal polynomial basis.  The fact that the basis is triangular allows to determine easily the total degree of each polynomial, making possible the application of filtered hyperinterpolation.

Following {\cite{an2010well}}, we have considered the function
\begin{eqnarray*}
    f({\mathbf{x}})=  {\frac{1}{3}} \sum_{i=1}^6 \Phi_2(\| {\mathbf{z}}_i - {\mathbf{x}}\|_2),
\end{eqnarray*}
where $\Phi_2(r):={\tilde{\Phi}}_2(r/\delta_2)$, in which ${\tilde{\Phi}}_2$ is the $\mathcal{C}^6$ compactly supported of minimal degree Wendland function defined as
\begin{eqnarray*}
    {\tilde{\Phi}}_2(r):=(\max\{1-r,0\})^6 (35 r^2 + 18r +3)
\end{eqnarray*}
and $\delta_2=\frac{9 \Gamma(5/2)}{2\Gamma(3)} $. We perturb $f$ by adding Gaussian noise ${\cal{N}}(0,\sigma^2)$ with standard deviation $\sigma=0.02$ and single impulse noise ${\cal{I}}(a)$ relatively to the level $a=0.02$.

In Figure \ref{figure:sphere1}, we show the the denoising abilities of hyperinterpolation $\mathcal{L}_L f^{\epsilon}$, filtered hyperinterpolation $\mathcal{F}_{L,N} f^{\epsilon}$, and hybrid hyperinterpolation $\mathcal{H}_{L}^{\lambda} f^{\epsilon}$ with the given regularization parameter $\lambda$ to reach its smallest $L_{\infty}$ error. As illustrated in Figure \ref{figure:sphere2} for hybrid hyperinterpolation, the smallest $L_2$ error of 0.012466 is obtained for $\lambda=0.005154$, while the smallest $L_{\infty}$ error of 0.008571 is obtained for $\lambda=0.006801$. From Table \ref{sphere_table}, it is evident that hybrid hyperinterpolation strikes a balance between sparsity and minimizing approximation errors in both $L_2$ and $L_{\infty}$ norms. While it is less sparse than Lasso hyperinterpolation, hybrid hyperinterpolation provides a more effective denoising outcome.

Consistently with tests in previous domains, it is remarkable  that hybrid hyperinterpolation can reach both smallest $L_2$ and $L_{\infty}$ errors compared with hyperinterpolation, filtered hyperinterpolation, and Lasso hyperinterpolation. 

\begin{table}
\begin{center}
   \begin{tabular}{|c||c| c  |c||c| c  |c|}
    \hline
    \multirow{2}{*}{} &
      \multicolumn{1}{c |}{$\lambda$ } &
      \multicolumn{1}{c |}{smallest $L_2$ error} &
      \multicolumn{1}{c ||}{sparsity } &
      \multicolumn{1}{c |}{$\lambda$ } &
      \multicolumn{1}{c |}{smallest $L_{\infty}$ error}&
      \multicolumn{1}{c |}{sparsity } \\
    \hline

{\em{Hyperint.}} & $ - $ & $ 0.052101 $ & $ 256 $ & $ - $ & $0.058726$ & $256$ \\

{\em{Filtered}} & $ - $ & $ 0.036648 $ & $ 225 $ & $ - $ & $0.041651$ & $225$ \\

{\em{Lasso}} & $ 0.005921 $ & $ 0.013738 $ & $ 26 $ & $ 0.007289 $ & $0.009675$ & $13$ \\

{\em{Hybrid}} & $ 0.005154 $ & $ 0.012466 $ & $ 31 $ & $ 0.006801 $ & $0.008571$ & $17$ \\

    \hline
  \end{tabular}
  \caption{Smallest approximation errors in the $L_2$ and $L_{\infty}$ norms and the sparsity of variants of hyperinterpolation coefficients of noisy versions of $f(x) ={\frac{1}{3}} \sum_{i=1}^6 \Phi_2(\| {\mathbf{z}}_i - {\mathbf{x}}\|_2)$, with  impulse noise ($a=0.02$) and  Gaussian noise ($\sigma=0.02$) added on, over the unit-sphere ${\mathbb{S}}^2$. We list the $\lambda$ that provide the best results for these norms for Lasso and Hybrid hyperinterpolation are indicated in the second and fifth column}
  \label{sphere_table}
  \end{center}
\end{table}

\begin{figure}[htbp]
    \centering
    \includegraphics[width=\textwidth]{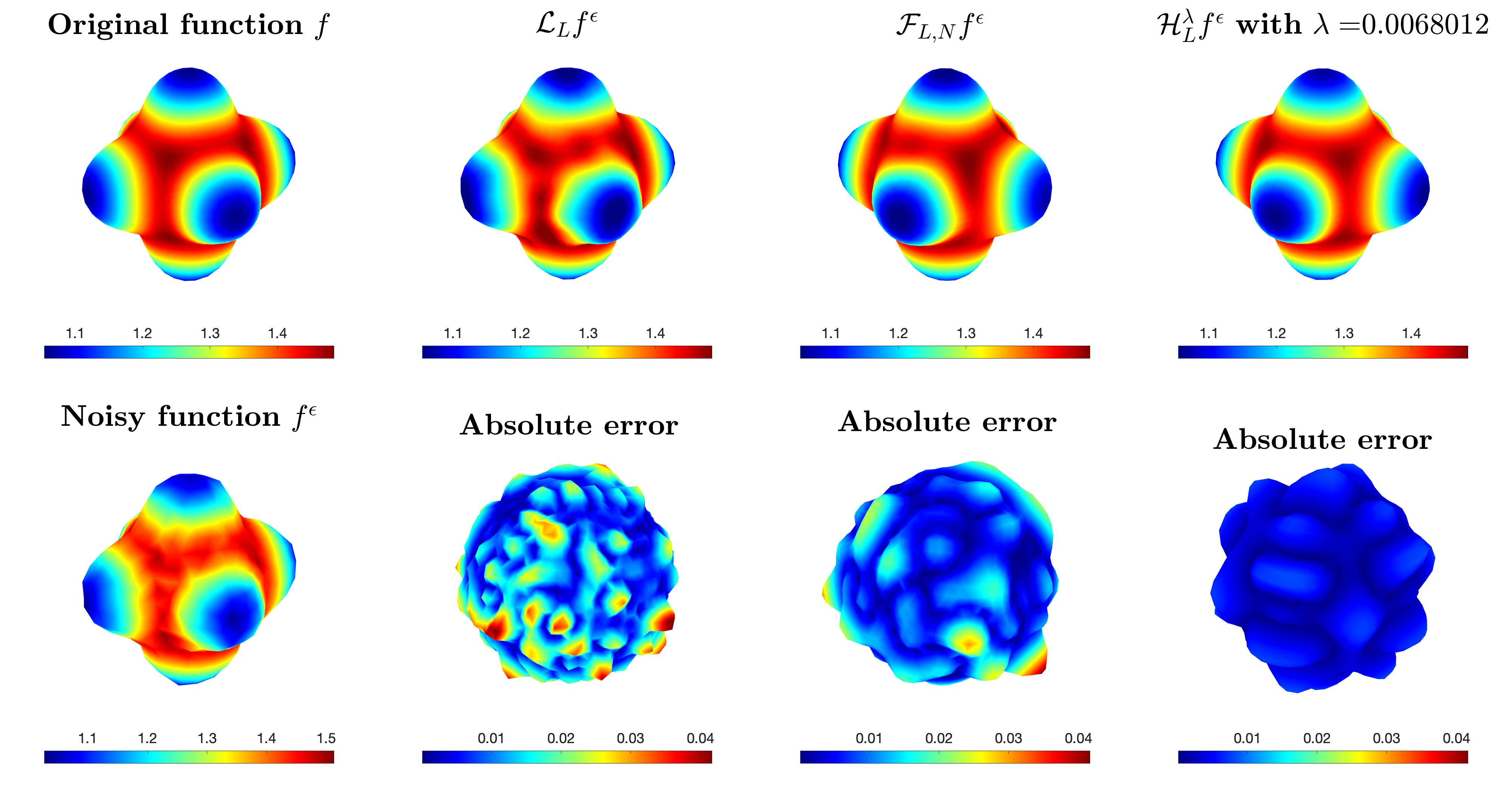}
    \caption{Approximate  $f(x) ={\frac{1}{3}} \sum_{i=1}^6 \Phi_2(\| {\mathbf{z}}_i - {\mathbf{x}}\|_2)$, perturbed by impulse noise ($a=0.02$) and Gaussian noise ($\sigma=0.02$), over the unit-sphere ${\mathbb{S}}^2$, via
    hyperinterpolation $\mathcal{L}_L f^{\epsilon}$,
    filtered hyperinterpolation $\mathcal{F}_{L,N}f^{\epsilon}$, and
    hybrid hyperinterpolation $\mathcal{H}_{L}^{\lambda} f^{\epsilon}$ with $L=15$.}
    \label{figure:sphere1}
\end{figure}

\begin{figure}[htbp]
    \centering
    \includegraphics[width=\textwidth]{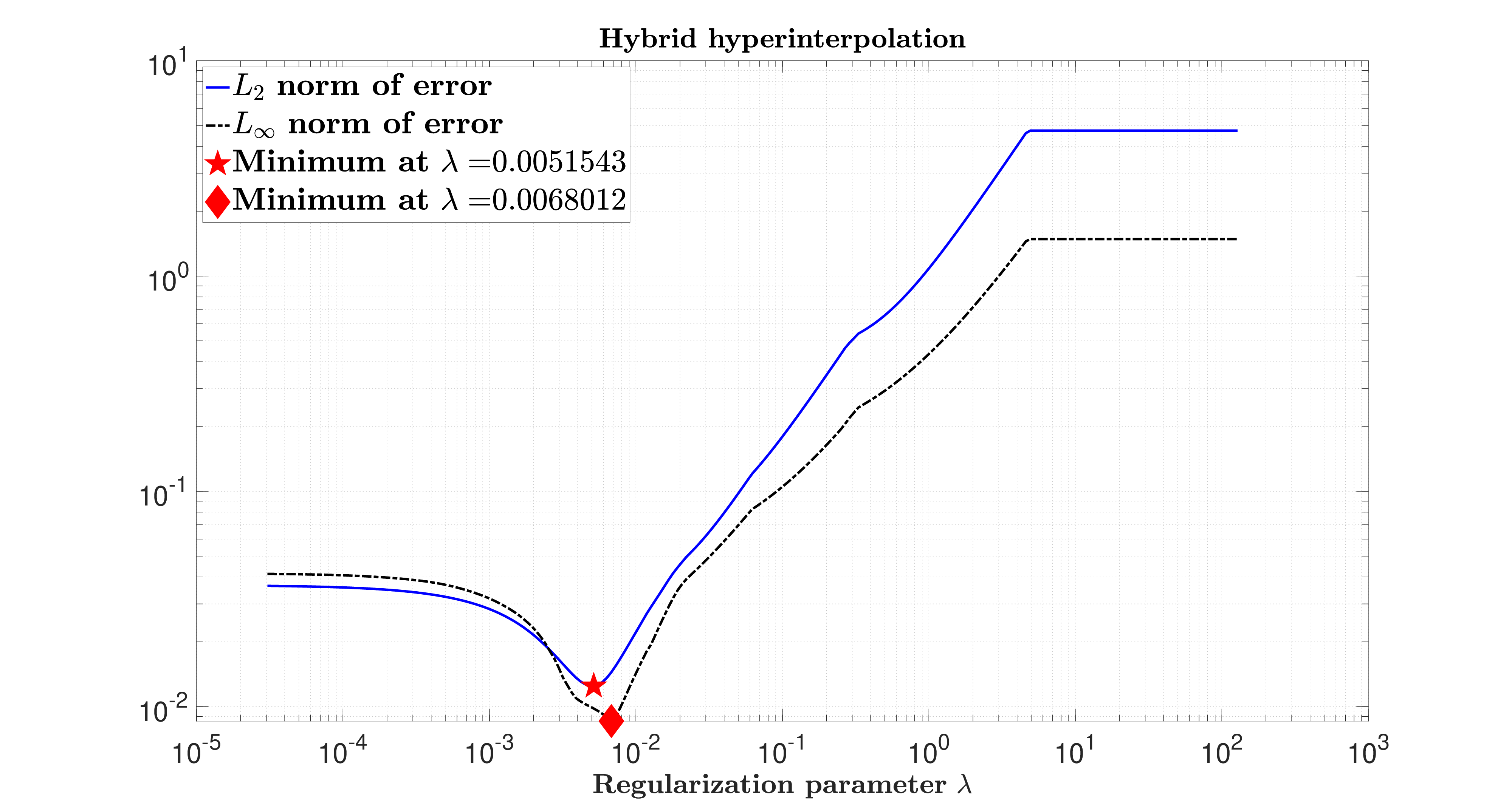}
    \caption{The choices of regularization parameter $\lambda$  for hybrid hyperinterpolation   $\mathcal{H}_L^{\lambda} f^{\epsilon}$ with $L=15$ approximating
    $f(x) ={\frac{1}{3}} \sum_{i=1}^6 \Phi_2(\| {\mathbf{z}}_i - {\mathbf{x}}\|_2)$, perturbed by impulse noise ($a=0.02$) and Gaussian noise ($\sigma=0.02$), over the unit-sphere ${\mathbb{S}}^2$.}
    \label{figure:sphere2}
\end{figure}

\section{Conclusions and outlook}

%In this work, we have shown that hybrid hyperinterpolation is an effective and robust approach to recover noisy functions over general regions. Specifically, it is a combination of Lasso and filtered hyperinterpolations, and possesses denosing and basis selection abilities and doubly shrinks less relevant coefficients of hyperinterpolation. On the other hand, we decompose $L_2$ errors for hybrid hyperinterpolation into three exact computed terms. Our numerical examples have shown that hybrid hyperinterpolation performs  enhanced and robust denosing effects  over interval, unit-disk, unit-sphere, unit-cube and union of disks, and verify that $L_2$ errors computed by three exact computed terms are  almost the same as that computed directly. It is the first time we apply the variants of hyperinterpolation to a more complicated region --- union of disks in which an orthonormal basis is not theoretically available and must be computed numerically.

In this study, we have demonstrated the efficacy and resilience of hybrid hyperinterpolation in recovering noisy functions across the unit sphere and the union of disks. Combining Lasso and filtered hyperinterpolations, hybrid hyperinterpolation offers denoising and basis selection capabilities while doubly shrinking less relevant coefficients. Our numerical experiments confirm the enhanced and robust denoising performance of hybrid hyperinterpolation over the unit-sphere  and the unions of disks. % Notably, this is the first application of hyperinterpolation variants to complex regions like unions of disks, where theoretical bases are not readily available and must be computed numerically.

%There are several avenues for further investigation. First, recall that hybrid hyperinterpolation actually corresponds to an $\ell^2+\ell_1$-regularized discrete least squares approximation. It would be interesting to consider other regularized terms. For example, the springback penalty ($\|x\|_1 - \alpha/2\|x\|_2^2$ with $\alpha >0$ being a model parameter) recently proposed in \cite{an2022springback} has a good performance in robust signal recovery.

Moving forward, there are several promising avenues for further exploration. First, while hybrid hyperinterpolation currently employs an $\ell^2+\ell_1$ regularization scheme, exploring other regularization terms could yield valuable insights. For instance, the springback penalty ($\|x\|_1 - \alpha/2\|x\|_2^2$ with $\alpha >0$ being a model parameter) proposed in recent literature \cite{an2022springback} has shown promise in robust signal recovery.

%Second, we note that the filter function $h(\cdot)$ have many different forms, and we choose a specific one defined by \eqref{equ:filtered}.The existence of the soft thresholding operator shrinks all coefficients of hyperinterpolants first, and the filter function may shrink these coefficients  again. One may find an appropriate filter function to make up the shrinkage caused by the soft thresholding operator on these high relevant coefficients and to doubly shrink less relevant ones.

Second, the choice of filter function $h(\cdot)$ offers various possibilities, and exploring alternative forms could enhance the shrinkage process for both relevant and less relevant coefficients.

%Third, we construct hyperinterpolation and its variants  of degree $n$ with a positive-weight quadrature rule with exactness degree $2n$. We find that the required exactness degree $2n$ in hyperinterpolation can be relaxed to $n+k$ with $0<k\leq n$, and the $L_2$ norm of the exactness-relaxing hyperinterpolation operator is also bounded by a constant independent of $n$ with the help of  Marcinkiewicz–Zygmund inequality \cite{an2022quadrature}. The other important thing revealed in \cite{an2022bypassing} is that the construction of hyperinterpolation on the sphere only need  satisfy the Marcinkiewicz–Zygmund property, i.e.,  hyperinterpolation can be constructed by a positive-weight quadrature rule (not necessarily with quadrature exactness $2n$). There are still a lot work can be done for hyperinterpolation and its variants under this new scheme.

Third, recent advancements suggest that the required exactness degree in hyperinterpolation can be relaxed or bypassed \cite{an2022bypassing, an2022quadrature}, opening avenues for further investigation into hyperinterpolation and its variants under this novel framework.

%Fourth and finally, we note that there are some interesting quadrature formulas, for example, a low cardinality PI-type algebraic cubature rule of degree $n$, with at most  $(n + 1)(n + 2)/2$ nodes, over curvilinear polygons defined by piecewise rational functions. One may intend to apply variants of hyperinterpolation to these more complicated regions with some new cubature rules \cite{sommoriva2023lowcadrinality}.

Last but far from the least, exploring the application of hyperinterpolation variants to regions defined by piecewise rational functions, such as curvilinear polygons, presents an intriguing avenue for future research, especially with the advent of low cardinality PI-type algebraic cubature rules \cite{sommoriva2023lowcadrinality}. Hyperinterpolation on the square \cite{caliari2007hyperinterpolation}, on the disk \cite{atkinson2009norm}, in the cube \cite{caliari2008hyperinterpolation},on the sphere \cite{dai2006generalized,LeGia2001uniform,pieper2009vector,reimer2000hyperinterpolation}, over nonstandard planar regions \cite{sommariva2017nonstandard} and spherical triangles \cite{sommariva2021sphericaltri} were also considered in the past.

\section*{Acknowledgements}
The first author (C. An) of the research  is partially supported by  National Natural Science Foundation of China (No.12371099). Work partially supported by the DOR funds of the University of Padova, and by the INdAM-GNCS 2024 Projects “Kernel and polynomial methods for approximation and integration: theory and application software” (A. Sommariva).  This research has been accomplished within the RITA ``Research ITalian network on Approximation", the SIMAI Activity Group ANA\&A, and the UMI Group TAA ``Approximation Theory and Applications" (A. Sommariva).

\section*{Conflict of interest Statement} The authors declare that they have no conflict of interest.
\section*{Ethical statement} The submitted work is original and has not been published elsewhere in any form or language. Our investigations do not involve animal experimentation or human participants.
\section*{Informed consent} On behalf of the authors, Prof. Alvise Sommariva shall be communicating the manuscript.

\newpage
%\section{References}
\bibliographystyle{siamplain}
\bibliography{ref_HybridHyper}
\clearpage

\end{document}